\documentclass
{amsart}
\usepackage{graphicx}
\vfuzz2pt 
\hfuzz2pt 
\newtheorem{thm}{Theorem}[section]
\theoremstyle{definition}
\newtheorem{cor}[thm]{Corollary}
\newtheorem{prop}[thm]{Proposition}
\newtheorem{defn}[thm]{Definition}
\newtheorem{lem}[thm]{Lemma}

\newtheorem{rem}[thm]{Remark}
\newtheorem{ex}[thm]{Example}

\numberwithin{equation}{section}
\begin{document}
\title[On the second spectrum of a module (II)]
{On the second spectrum of a module (II)}
\author{H. Ansari-Toroghy, S. Keyvani and F. Farshadifar}
\address{Department of pure Mathematics, Faculty of mathematical Sciences, University of Guilan,
P. O. Box 41335-19141 Rasht, Iran.} %
\email{ansari@guilan.ac.ir}%
\address{Department of pure Mathematics, Faculty of mathematical Sciences, University of Guilan,
P. O. Box 41335-19141 Rasht, Iran.} %
\email{siamak\_Keyvani@guilan.ac.ir}%
\address{Department of pure Mathematics, Faculty of mathematical Sciences, University of Guilan,
P. O. Box 41335-19141 Rasht, Iran.} %
\email{farshadifar@guilan.ac.ir}%

\subjclass[2000]{13C13, 13C99}%
\keywords {Second submodule, maximal second submodule, second spectrum, spectral space, and Zariski topology}

\begin{abstract}
Let $R$ be a commutative ring and $M$ an $R$-module. Let $Spec^s(M)$
be the the collection of all second submodules of $M$. In this article, we consider a
new topology on $Spec^s(M)$, called the second classical Zariski topology, and investigate
the interplay between the module theoretic properties of $M$ and the topological properties of $Spec^s(M)$.
Moreover, we study $Spec^s(M)$ from point of view of spectral space.
\end{abstract}

\maketitle
\section{Introduction}
Throughout this paper, $R$ will denote a commutative ring with
identity. If $N$ is a subset of an $R$-module $M$ we write $N\leq M$ to indicate that $N$ is a submodule of $M$.

Let $M$ be an $R$-module. A proper submodule $N$ of $M$ is said to be \emph{prime} if for
any $r \in R$ and $m \in M$ with $rm \in N$, we have $m \in N$
or $r \in (N:_RM)$. The \emph{prime spectrum} of $M$ denoted by $Spec(M)$ is the set of all prime submodules of $M$.

A non-zero submodule $N$ of $M$ is said to be \emph{second} if for each $a \in R$, the homomorphism $ N \stackrel {a} \rightarrow N$ is either surjective
or zero \cite{Y01}. More information about this class of modules can be found in \cite{AF12}, \cite{AF1}, and \cite{AF21}.

The concept of prime submodule has led to the development of topologies on the spectrum of modules. A brief history of this development can be seen in \cite[Page 808]{lu10}. More information concerning the spectrum of rings, posets, and modules can be found in \cite{Ab10}, \cite{Ab11}, \cite{BH08}, \cite{BHa0}, \cite{Ho71}, \cite{lS03}, and \cite{S10}.

Let $Spec^s(M)$ be the set of all second submodules of $M$. For any submodule $N$ of  $M$, $V^{s*}(N)$ is defined to be the set of all second submodules of $M$ contained in $N$. Of course, $V^{s*}(0)$ is just the empty set and $V^{s*}(M)$ is $Spec^s(M)$. It is easy to see that for any family of submodules $N_i$ ($ i \in I$) of $M$, $\cap_{i \in I}V^{s*}(N_i)=V^{s*}(\cap_{i \in I}N_i)$. Thus if $\zeta^{s*} (M)$ denotes the collection of all subsets $V^{s*}(N)$ of $Spec^s(M)$, then $\zeta^{s*}(M)$ contains the empty set and $Spec^s(M)$, and $\zeta^{s*}(M)$ is closed under arbitrary intersections. In general $\zeta^{s*}(M)$ is not closed under finite unions. A module $M$ is called a \emph{cotop module} if $\zeta^{s*}(M)$ is closed under finite unions. In this case, $\zeta^{s*}(M)$ is called
the \emph{quasi Zariski topology} \cite{AF21}.

Now let $N$ be a submodule of $M$. We define $W^s(N)=Spec^s(M)- V^{s*}(N)$ and put $\Omega^s(M)=\{W^s(N):N\leq M \}$. Let $\eta^s(M)$  be the topology on $Spec^s(M)$ by the sub-basis $\Omega^s(M)$. In fact $\eta^s(M)$ is the collection $U$ of all unions of finite intersections of elements of $\Omega^s(M)$ \cite{Mu75}. We call this topology the \emph{second classical Zariski topology} of $M$. It is clear that
if $M$ is a cotop module, then its related topology, as it was mentioned in the above paragraph, coincide with the second
classical Zariski topology. In this paper, we obtain some
new results analogous to those for classical Zariski topology considered in \cite{BH08} and \cite{BHa0}. In Section 2
of this paper, among other results, we investigate the relationship
between the module theoretic properties of $M$ and the topological properties of $Spec^s(M)$ (see Proposition \ref{p2.2}, Corollary \ref{c2.5}, and Theorems \ref{t2.6}, \ref{t2.10}, \ref{t2.12}, and \ref{t2.13}). Moreover,
Theorems \ref{t2.13} and \ref{t3.75} provide some useful characterizations for those modules
whose second classical Zariski topologies are cofinite topologies.

Following M. Hochster \cite{Ho69}, we say that a topological space $W$ is a spectral space if $W$ is homeomorphic to
$Spec(S)$, with the Zariski topology, for some ring $S$.
 Spectral spaces have been characterized by M. Hochster as quasi-compact $T_0$-spaces $W$ having
a quasi-compact open base closed under finite intersection and each irreducible closed subset of $W$
has a generic point \cite{Ho69}. In Section 3, we follow the Hochster's characterization and consider
$Spec^s(M)$ from point view of spectral spaces. We prove that
if $M$ has dcc on socle submodules, then for each $n \in \Bbb N$,
and submodule $N_i$ $(1\leq i \leq n)$ of $M$, $W^s{(N_1)} \cap W^s{(N_2)} \cap, ... \cap W^s{(N_n)}$, in particular $Spec^s(M)$, is quasi compact with second classical Zariski topology (see Theorem \ref{t3.10}).
 It is shown that if $M$ is a finite $R$-module, then $Spec^s(M)$ is a spectral space (see Theorem \ref{t3.31}). Also, it is proved that if $M$ is an $R$-module such that $M$ has dcc on socle submodules, then
$Spec^s(M)$ is a spectral space (see Theorem \ref{t3.11}). Moreover, we show
that if $R$ is a commutative Noetherian ring and $M$ is a comultiplication $R$-module with finite length, then
$Spec^s(M)$ is spectral (see Proposition \ref{p3.30}).

In the rest of this paper, for an $R$-module $M$, $X^s(M)$ will denote $Spec^s(M)$.

\section{Topology on $Spec^s(M)$}
We will consider the cases an $R$-module $M$ when satisfies the following condition:
$$
(**)\\\ For\\\ any\\\ submodules\\\ N_1,N_2 \leq M, \\\ V^{s*}(N_1)=V^{s*}(N_2)\Rightarrow N_1=N_2.
$$
\begin{ex}\label{e2.1}
Every vector space satisfies the $(**)$ condition.
\end{ex}

We recall that for an $R$-module $M$, the \emph{second socle} of $M$ is defined to be the sum of all second submodules of $M$ and denoted by $soc(M)$. If $M$ has no second submodule, then $soc(M)$ is defined to be $0$. Also, a submodule $N$ of $M$ is said to be a \emph{socle submodule} of $M$ if $soc(N)=N$ \cite{AF12}.

\begin{prop}\label{p2.2}
Let $M$ be a nonzero $R$-module. Then the following statements are equivalent.
\begin{itemize}
  \item [(a)] $M$ satisfies the $(**)$ condition.
  \item [(b)] Every nonzero submodule of $M$ is a socle submodule of $M$.
\end{itemize}
\end{prop}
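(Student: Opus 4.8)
The plan is to reduce the whole statement to the single identity $V^{s*}(N)=V^{s*}(soc(N))$, valid for every submodule $N\leq M$. First I would record the elementary fact that a nonzero submodule $L$ with $L\leq N\leq M$ is second as a submodule of $M$ if and only if it is second as a submodule of $N$: the defining condition, that $L\stackrel{a}\rightarrow L$ be surjective or zero for every $a\in R$, refers only to $L$ as an $R$-module, not to the ambient module. Hence $V^{s*}(N)$ is precisely the set of second submodules of $N$, and therefore $soc(N)=\sum_{L\in V^{s*}(N)}L$, with the convention $soc(N)=0$ when $V^{s*}(N)=\emptyset$. Since $soc(N)\leq N$, every second submodule of $M$ contained in $N$ is already contained in $soc(N)$, and conversely every second submodule contained in $soc(N)$ is contained in $N$; thus $V^{s*}(N)=V^{s*}(soc(N))$.

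For $(a)\Rightarrow(b)$, let $N$ be a nonzero submodule of $M$. Note that $soc(N)$ is a submodule of $M$, being a sum of submodules of $M$. Applying the $(**)$ condition to the pair $N_1=N$, $N_2=soc(N)$ and using $V^{s*}(N)=V^{s*}(soc(N))$ forces $N=soc(N)$, i.e.\ $N$ is a socle submodule of $M$. For $(b)\Rightarrow(a)$, suppose $V^{s*}(N_1)=V^{s*}(N_2)$. Summing the members of the two equal sets gives $soc(N_1)=soc(N_2)$. If $N_1$ and $N_2$ are both nonzero, hypothesis $(b)$ yields $N_1=soc(N_1)=soc(N_2)=N_2$. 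If instead, say, $N_1=0$, then $V^{s*}(N_2)=V^{s*}(0)=\emptyset$, so $N_2$ has no second submodule and $soc(N_2)=0$; were $N_2$ nonzero, $(b)$ would give $N_2=soc(N_2)=0$, a contradiction, so $N_2=0=N_1$.

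The only genuinely delicate point is the bookkeeping for submodules with zero second socle — those containing no second submodule: one must use the convention $soc(N)=0$ in that case consistently, and it is exactly the $(**)$ condition applied with a zero argument that rules such submodules out as counterexamples in the $(b)\Rightarrow(a)$ direction. Everything else is a direct unwinding of the definitions of $V^{s*}$ and $soc$.
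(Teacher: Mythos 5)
Your proof is correct and follows essentially the same route as the paper: in $(a)\Rightarrow(b)$ you apply $(**)$ to the pair $N$ and $soc(N)=\sum_{S\in V^{s*}(N)}S$, which is exactly the paper's argument, and in $(b)\Rightarrow(a)$ you unwind $V^{s*}(N_1)=V^{s*}(N_2)\Rightarrow soc(N_1)=soc(N_2)$ as the paper does, just with the zero case handled more explicitly. No issues.
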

\begin{proof}
$(a)\Rightarrow (b)$. Let $S_1$ be a nonzero submodule of $M$. We claim that $V^{s*}(S_1)
\not =\emptyset$. Otherwise, $V^{s*}(S_1)=\emptyset=V^{s*}(0)$ implies that $S_1=0$, a contradiction. Now let $S_2=\sum_{\acute{S} \in V^{s*}(S_1)}\acute{S}$. Clearly, $V^{s*}(S_1)=V^{s*}(S_2)$. So by hypothesis, $S_1=S_2$. Hence $S_1$ is a socle submodule of $M$.

$(b)\Rightarrow (a)$. This follows from the fact that every submodule $N$ of $M$ is a sum of second submodules if and only if $N=\sum_{S \in V^{s*}(N)}S$.
\end{proof}

\begin{cor}\label{c2.3}
Every semisimple $R$-module $M$ satisfies the $(**)$ condition.
\end{cor}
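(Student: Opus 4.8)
The plan is to reduce everything to Proposition \ref{p2.2}, whose equivalence $(a)\Leftrightarrow(b)$ says that $M$ satisfies $(**)$ as soon as every nonzero submodule of $M$ is a socle submodule of $M$, i.e. is a sum of second submodules of $M$. (The degenerate case $M=0$ is vacuous, so I would simply assume $M\neq 0$ and apply Proposition \ref{p2.2}.)

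The two ingredients I would assemble are standard. First, the structure theory of semisimple modules: a semisimple module is a direct sum of simple submodules and, crucially, every submodule $N$ of a semisimple module is again semisimple, so $N=\sum_{i\in I}S_i$ for some family of simple submodules $S_i\leq N$. Second, every simple $R$-module $S$ is second: for $a\in R$ the image of the multiplication map $S\stackrel{a}{\rightarrow}S$ is $aS$, a submodule of the simple module $S$, hence $aS=0$ (the map is zero) or $aS=S$ (the map is surjective). Moreover, being second is a property of $S$ together with its $R$-action and does not refer to any ambient module, so each $S_i$ occurring above is a second submodule of $M$ contained in $N$, i.e. $S_i\in V^{s*}(N)$.

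Putting these together, for a nonzero $N\leq M$ I would write
\[
N=\sum_{i\in I}S_i\subseteq \sum_{S\in V^{s*}(N)}S\subseteq N,
\]
forcing $N=\sum_{S\in V^{s*}(N)}S$; by the remark used in the proof of Proposition \ref{p2.2} this is precisely the statement that $N$ is a socle submodule of $M$. Hence condition (b) of Proposition \ref{p2.2} holds, and $M$ satisfies $(**)$. I do not expect any genuine obstacle here; the only points that need care are the intrinsic nature of ``second'' (so that a simple submodule of $M$ genuinely counts as a second submodule of $M$, and $V^{s*}(N)$ picks up all of the $S_i$) and the decision to route the argument through Proposition \ref{p2.2} rather than verifying the implication $V^{s*}(N_1)=V^{s*}(N_2)\Rightarrow N_1=N_2$ directly.
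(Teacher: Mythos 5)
Your proposal is correct and follows essentially the same route as the paper: the paper also reduces to Proposition \ref{p2.2} via the fact that every minimal (simple) submodule is second, citing \cite[1.6]{Y01} for the step you prove directly. The only difference is that you spell out the semisimple decomposition of an arbitrary submodule and the verification that simple modules are second, both of which the paper leaves implicit.
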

\begin{proof}
This follows from the fact that every minimal submodule of $M$ is a second submodule of $M$ by \cite[1.6]{Y01}
\end{proof}

Let $M$ be an $R$-module. A proper submodule $P$ of $M$ is said to be
a \emph{semiprime submodule} if $I^2N \subseteq P$, where $N\leq M$ and $I$ is an ideal of $R$, then
$IN\subseteq P$. $M$ is said to be \emph{fully semiprime} if each
proper submodule of $M$ is semiprime.

A nonzero submodule $N$ of $M$ is said to be
\emph{semisecond} if $rN = r^2N$ for each $r\in R$ \cite{AF11}.

\begin{rem}\label{r2.60}
Let $M$ be an $R$-module and $N$ be a submodule of $M$. Let denote the set of all
prime submodules of $M$ by $Spec_R(M)$. Define $V(N)= \{P \in Spec_R(M): P\supseteq N \}$.
An $R$-module $M$ is said to satisfy the $(*)$ condition provided that
if $N_1, N_2 $ are submodules of $M$
with $V(N_1)= V(N_2)$, then $N_1=N_2$ \cite{BH08}.
\end{rem}

\begin{defn}\label{d2.4}
We call an $R$-module $M$ \emph{fully semisecond} if each nonzero submodule of $M$ is semisecond.
\end{defn}

\begin{cor}\label{c2.5}
Let $M$ be an $R$-module. Then the following statements are equivalent.
\begin{itemize}
  \item [(a)] $M$ satisfies the $(*)$ condition.
  \item [(b)] $M$ is a fully semiprime module.
  \item [(c)] $M$ is a cosemisimple module.
  \item [(d)] $M$ satisfies the $(**)$ condition.
  \item [(e)] $M$ is a fully semisecond module.
\end{itemize}
\end{cor}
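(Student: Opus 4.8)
The plan is to take the chain (a) $\Leftrightarrow$ (b) $\Leftrightarrow$ (c) as essentially known and to splice (d) and (e) into it. Indeed, by \cite{BH08} an $R$-module satisfies the $(*)$ condition if and only if it is fully semiprime, and a fully semiprime module is precisely a cosemisimple one; so I would quote these facts for (a) $\Leftrightarrow$ (b) $\Leftrightarrow$ (c) and concentrate on proving (b) $\Leftrightarrow$ (e), (d) $\Rightarrow$ (e), and one implication back into (d), which together close the circle.

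For (b) $\Rightarrow$ (e): fix $0 \neq N \leq M$ and $r \in R$. Since $r^2N \subseteq rN$ always holds, it suffices to obtain the reverse inclusion. If $r^2N = M$ this is trivial; otherwise $P = r^2N$ is a proper, hence semiprime, submodule, and the semiprime condition applied with this $P$, the ideal $I = Rr$ and the submodule $L = N$ (for which $I^2L = r^2N = P$) yields $rN = IL \subseteq P = r^2N$. Hence $rN = r^2N$ for every $r$, so every nonzero submodule of $M$ is semisecond, i.e. (e) holds. Conversely, for (e) $\Rightarrow$ (b): if $P$ is a proper submodule, $I$ an ideal and $L \leq M$ with $I^2L \subseteq P$, then for each $r \in I$ semisecondness of $L$ gives $rL = r^2L \subseteq I^2L \subseteq P$, so $IL \subseteq P$ and $P$ is semiprime. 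Finally, for (d) $\Rightarrow$ (e): by Proposition \ref{p2.2}, (d) says that every nonzero submodule has the form $N = \sum_{S \in V^{s*}(N)} S$, and since each $rS$ is either $S$ or $0$ we get $rN = \sum_S rS = \sum_S r^2S = r^2N$, so $N$ is semisecond.

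What remains --- and what I expect to be the main obstacle --- is an implication back into (d), say (c) $\Rightarrow$ (d) (equivalently (a) or (e) $\Rightarrow$ (d)). By Proposition \ref{p2.2} this amounts to showing that in a cosemisimple module every nonzero submodule equals the sum of the second submodules it contains. The natural attack is: given $0 \neq N \leq M$ with $N \neq N'$, where $N' = \sum_{S \in V^{s*}(N)} S$, use the cosemisimplicity inherited by $N$ to locate a second submodule of $N$ --- ideally a simple one, which is automatically second by \cite[1.6]{Y01} --- that is not contained in $N'$, contradicting the definition of $N'$ as the sum of \emph{all} second submodules of $N$. Pinning down this step is exactly where the care is needed: it is the only point at which cosemisimplicity (or, failing that, a finiteness hypothesis on $M$) must genuinely be used, and once it is in hand, chaining it with the three routine implications above forces (a)--(e) to be equivalent.
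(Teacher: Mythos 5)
Your implications $(b)\Leftrightarrow(e)$ and $(d)\Rightarrow(e)$ are correct, and they are in fact more self-contained than the paper's own argument, which simply cites \cite[4.8]{AF11} for $(b)\Leftrightarrow(e)$ and Proposition \ref{p2.2} for $(d)\Leftrightarrow(e)$. (One tiny point of hygiene in $(e)\Rightarrow(b)$: semisecondness is only defined for nonzero submodules, so the case $L=0$ should be dispatched separately, which is trivial.) But, as you yourself flag, the cycle is not closed: you establish no implication into $(d)$, and this is not a routine step that can be deferred --- by Proposition \ref{p2.2} it amounts to showing that under $(c)$ (equivalently $(e)$) every nonzero submodule $N$ equals $\sum_{S\in V^{s*}(N)}S$, i.e.\ is a sum of second submodules, and that is the real content of the corollary. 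In the paper this is exactly what the combination of \cite[4.8]{AF11} with Proposition \ref{p2.2} supplies; nothing in your proposal substitutes for it.

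Moreover, the attack you sketch for the missing step cannot work as stated. Assuming $N\neq N'=\sum_{S\in V^{s*}(N)}S$, you propose to locate a second (ideally simple) submodule of $N$ that is \emph{not} contained in $N'$. But secondness is an intrinsic property of a submodule, so every second submodule of $N$ is a second submodule of $M$ contained in $N$, hence belongs to $V^{s*}(N)$ and is contained in $N'$ by the very definition of $N'$. The object you want to produce does not exist for \emph{any} $N$, so its nonexistence gives no information about whether $N=N'$; the contradiction you aim for is with a tautology, not with the hypothesis $N\neq N'$. A genuinely different idea (or an appeal to the cited result of \cite{AF11}) is needed here. Until that implication is supplied, your proposal proves only the equivalence of $(a)$, $(b)$, $(c)$, $(e)$ together with the one-way implication $(d)\Rightarrow(e)$.
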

\begin{proof}
$(a)\Leftrightarrow (b) \Leftrightarrow (c)$. By \cite[2.6]{BH08}.

$(d)\Leftrightarrow (e)$. By Proposition \ref{p2.2}.

$(b)\Leftrightarrow (e)$. By \cite[4.8]{AF11}.
\end{proof}
The \emph{second submodule dimension} of an $R$-module $M$, denoted by $S.dim M$, is defined to be the supremum of the length of chains of second submodules of $M$ if $X^s(M) \not =\emptyset$ and $-1$ otherwise \cite{AF25}.

Let $X$ be a topological space and let $x$ and $y$ be points in $X$. We say that $x$ and $y$
can be \emph{separated} if each lies in an open set which does not contain the other point.
$X$ is a \emph{$T_1$-space} if any two distinct points in $X$ can be separated. A topological
space $X$ is a $T_1$-space if and only if all points of $X$ are closed in $X$.

\begin{thm}\label{t2.6}
Let $M$ be an $R$-module. Then $X^s(M)$ is a $T_1$-space if and only if $S.dim(M)\leq 0$.
\end{thm}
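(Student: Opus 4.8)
The plan is to compute the closure of an arbitrary point of $X^s(M)$ and then invoke the standard fact that a topological space is $T_1$ exactly when every singleton is closed. First I would dispose of the trivial case $X^s(M)=\emptyset$: here the space is vacuously $T_1$ and, by definition, $S.\dim(M)=-1\le 0$, so both sides of the equivalence hold. From now on assume $X^s(M)\ne\emptyset$.

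The heart of the argument is the claim that for every $S\in X^s(M)$ one has $\overline{\{S\}}=V^{s*}(S)$. Recall that the sets $W^s(N)=X^s(M)\setminus V^{s*}(N)$, $N\le M$, form a sub-basis for $\eta^s(M)$, and that in order to test whether a point lies in the closure of a set it suffices to consider sub-basic open neighbourhoods (a finite intersection of sub-basic neighbourhoods of a point $T$ contains $S$ as soon as each of its factors does, while conversely a single sub-basic neighbourhood of $T$ missing $S$ already shows $T\notin\overline{\{S\}}$). Thus a second submodule $T$ lies in $\overline{\{S\}}$ if and only if for every submodule $N\le M$ with $T\in W^s(N)$ we also have $S\in W^s(N)$; since $T\in W^s(N)\iff T\not\subseteq N$, this says precisely that for all $N\le M$, $S\subseteq N$ implies $T\subseteq N$. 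Taking $N=S$ forces $T\subseteq S$, and conversely if $T\subseteq S$ then $S\subseteq N$ gives $T\subseteq N$; hence $T\in\overline{\{S\}}\iff T\subseteq S\iff T\in V^{s*}(S)$, as claimed.

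Finally I would assemble the equivalence. The space $X^s(M)$ is $T_1$ if and only if $\overline{\{S\}}=\{S\}$ for every $S\in X^s(M)$, which by the previous paragraph holds if and only if $V^{s*}(S)=\{S\}$ for every second submodule $S$ of $M$, i.e.\ no second submodule is properly contained in another second submodule. This is exactly the assertion that every chain of second submodules has length $0$, that is, $S.\dim(M)\le 0$, completing the proof.

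The only step that needs a little care is the reduction to sub-basic open sets in the closure computation; after that, everything is a direct translation between the definitions of $V^{s*}$, of $W^s$, and of the second submodule dimension, so I do not expect a genuine obstacle.
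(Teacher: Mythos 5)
Your proof is correct and takes essentially the same route as the paper: both arguments reduce the $T_1$ property to singletons being closed and identify the closure of $\{S\}$ with $V^{s*}(S)$ (the paper does this implicitly in the forward direction by writing the closed set $\{S_2\}$ as $\cap_{i}(\cup_{j}V^{s*}(N_{ij}))$ and checking that any smaller second submodule also lies in it, and records the closure formula explicitly later in Corollary \ref{c2.24}(a)). Your explicit sub-basis computation of $\overline{\{S\}}$ is just a cleaner packaging of the same argument.
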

\begin{proof}
First assume that $X^s(M)$ is a $T_1$-space. If $X^s(M)=\emptyset$, then $dim(M)=-1$. Also, if $X^s(M)$ has one element, clearly $S.dimM=0$. So we can assume that $Spec^s(M)$ has more than two elements. We show that every element of $X^s(M)$ is minimal. To show this, let $S_1 \subseteq S_2$, where $S_1, S_2 \in X^s(M)$. Now $\{S_2\}=\cap _{i \in I}(\cup_{j=1}^{n_i}V^{s*}(N_{ij}))$, where $N_{ij} \leq M$ and $I$ is an index set. So for each $i \in I$, $S_2 \in \cup_{j=1}^{n_i}V^{s*}(N_{ij}))$ so that $S_2 \in V^{s*}(N_{it})$, $1\leq t\leq n_i$. Thus $S_1 \in V^{s*}(N_{it})$ for  $1\leq t\leq n_i$. This implies that $S_1 \in \cup_{j=1}^{n_i}V^{s*}(N_{ij}))$ for each $i$. Therefore, $S_1 \in \cap _{i \in I}(\cup_{j=1}^{n_i}V^{s*}(N_{ij}))=\{S_2\}$ as desired.

Conversely, suppose that $S.dim(M)\leq0$. If $S.dim(M)=-1$, then $X^s(M)=\emptyset$ and hence it is a $T_1$-space. Now let $S.dim(M)=0$. Then $X^s(M)\not = \emptyset$ and for every second submodule $S$ of $M$, $V^{s*}(S)=\{S\}$. Hence  $X^s(M)$ is a $T_1$-space.
\end{proof}
\begin{prop}\label{p2.7}
For every finitely cogenerated $R$-module $M$, the following are equivalent.
\begin{itemize}
  \item [(a)] $M$ is a semisimple module with $S.dim(M)=0$.
  \item [(b)] $X^s(M)$ is a $T_1$-space and $M$ satisfies the $(**)$ condition.
\end{itemize}
\end{prop}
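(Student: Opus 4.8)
The plan is to derive both implications from the tools already in place --- chiefly Corollary \ref{c2.3}, Theorem \ref{t2.6}, and Proposition \ref{p2.2} --- together with two standard facts about finitely cogenerated modules: a submodule of a finitely cogenerated module is again finitely cogenerated, and every nonzero finitely cogenerated module has a minimal (equivalently, simple) submodule. (One tacitly assumes $M\neq 0$ here, since for $M=0$ part $(b)$ holds vacuously while $S.\dim(0)=-1$.)

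For $(a)\Rightarrow(b)$ I would argue directly and briefly. If $M$ is semisimple, then it satisfies the $(**)$ condition by Corollary \ref{c2.3}; and since $S.\dim(M)=0\le 0$, Theorem \ref{t2.6} gives that $X^s(M)$ is a $T_1$-space. This half uses neither the finitely cogenerated hypothesis nor any computation.

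For $(b)\Rightarrow(a)$ I would proceed in three steps. First, from the $T_1$ hypothesis and Theorem \ref{t2.6} we get $S.\dim(M)\le 0$; and since $M$ satisfies $(**)$, Proposition \ref{p2.2} shows $M$ itself is a sum of second submodules, so $X^s(M)\neq\emptyset$ and hence $S.\dim(M)=0$. Second, I claim every $S\in X^s(M)$ is simple: being a nonzero submodule of the finitely cogenerated module $M$, $S$ is finitely cogenerated and so contains a minimal submodule $T$ of $M$; by \cite[1.6]{Y01}, $T$ is second, so if $T\subsetneq S$ we would obtain a chain of second submodules of length $1$, contradicting $S.\dim(M)=0$; thus $S=T$ is simple. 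Third, combining the first two steps, $M=\sum_{S\in V^{s*}(M)}S$ is a sum of simple submodules, i.e.\ $M$ is semisimple, and together with $S.\dim(M)=0$ this is precisely $(a)$.

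The only step requiring genuine care is the second one: here the $(**)$ condition (through Theorem \ref{t2.6}, which forces chains of second submodules to be trivial) and the finitely cogenerated hypothesis (which forces every nonzero second submodule to sit above a simple submodule) interact, and it is exactly the clash of these two facts that collapses each second submodule down to a simple module. Everything else is bookkeeping with the cited results.
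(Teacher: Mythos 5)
Your proof is correct and follows essentially the same route as the paper: Corollary \ref{c2.3} plus Theorem \ref{t2.6} for $(a)\Rightarrow(b)$, and for $(b)\Rightarrow(a)$ the combination of Theorem \ref{t2.6}, the fact that finite cogeneration forces each second submodule to contain (hence, by $S.\dim(M)=0$, to equal) a minimal submodule, and Proposition \ref{p2.2} to conclude semisimplicity. You merely fill in the details the paper leaves implicit (including the harmless exclusion of $M=0$), so no further comparison is needed.
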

\begin{proof}
$(a)\Rightarrow (b)$. By Corollary \ref{c2.3} and Theorem \ref{t2.6}.

$(b)\Rightarrow (a)$. Since $X^s(M)$ is a $T_1$-space, $S.dim(M)\leq 0$ by Theorem \ref{t2.6}. As $M$ is finitely cogenerated,
 every second submodule of $M$ is a minimal submodule of $M$. Now the claim follows from Proposition \ref{p2.2}
\end{proof}

An $R$-module $M$ is said to be a \emph{comultiplication module} if for every submodule $N$ of $M$ there exists an ideal $I$ of $R$ such that $N=(0:_MI)$, equivalently, for each submodule $N$
of $M$, we have $N=(0:_MAnn_R(N))$ \cite{AF07}. Further $M$ is said to be a \emph{weak comultiplication module}
if $M$ does not have any second submodule or for every second submodule $S$
of $M$, $S = (0 :_M I)$ for some $I$ is an ideal of $R$ \cite{AF1}

\begin{thm}\label{t2.10}
Let $M$ be a finite length module over a commutative Noetherian ring $R$ such that $X^s(M)$ is a $T_1$-space. Then $M$ is a comultiplication module.
\end{thm}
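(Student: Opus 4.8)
The plan is to exploit the hypothesis that $X^s(M)$ is a $T_1$-space to force every second submodule of $M$ to be simple, and then to leverage the finite length of $M$ together with the Noetherian hypothesis on $R$ to realize every submodule as an annihilator submodule $(0:_M I)$. First I would apply Theorem \ref{t2.6}: since $X^s(M)$ is $T_1$, we have $S.dim(M)\leq 0$, so every second submodule $S$ of $M$ satisfies $V^{s*}(S)=\{S\}$, i.e. $S$ contains no proper nonzero second submodule. Because $S$ is itself second (hence nonzero) and of finite length, $S$ contains a simple submodule, which is automatically second by \cite[1.6]{Y01}; minimality then forces $S$ to be simple. Thus every second submodule of $M$ is a simple submodule.

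Next I would show every simple submodule $S\leq M$ has the form $(0:_M I)$ for a suitable ideal $I$. Here $S\cong R/\mathfrak{m}$ for a maximal ideal $\mathfrak{m}=(0:_R S)=Ann_R(S)$, and I would take $I=\mathfrak{m}$; one checks $S\subseteq (0:_M \mathfrak{m})$ trivially, and the reverse inclusion $(0:_M \mathfrak{m})\subseteq S$ needs to be established — this is the delicate point and is where the finite length hypothesis on $M$ must really be used (if $(0:_M\mathfrak m)$ strictly contained $S$ it would contain a second submodule not equal to $S$, or one would have to argue via socle decompositions that a finite length module over a Noetherian ring has homogeneous socle components realized as annihilators). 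The cleanest route is: the socle $\mathrm{Soc}(M)$ (classical socle, a sum of simple submodules) decomposes, by finite length, as a finite direct sum of its $\mathfrak m$-homogeneous components $\mathrm{Soc}(M)_{\mathfrak m}=(0:_M\mathfrak m)$; if there are two distinct simple submodules with the same annihilator $\mathfrak m$ one argues they generate a non-simple semisimple submodule all of whose simple subquotients are second of dimension $0$, and the $T_1$/$S.dim\le 0$ condition together with the structure of second submodules rules this out, forcing each homogeneous component to be simple, hence $(0:_M\mathfrak m)=S$.

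Finally, for an arbitrary submodule $N\leq M$, I would produce the ideal $I$ with $N=(0:_M I)$. In a finite length module over a Noetherian ring, every submodule $N$ equals $(0:_M Ann_R(N))$ precisely when $M$ is comultiplication, so I would verify this equality by downward induction on $N$ (or on the length of $M/N$): the previous paragraph handles simple $N$, and for the inductive step I would intersect the annihilator representations of $N$ with a submodule of larger length, using that $(0:_M I_1)\cap(0:_M I_2)=(0:_M I_1+I_2)$, and dualize a composition series. The main obstacle I anticipate is precisely the step $(0:_M \mathfrak m)=S$ for simple $S$: controlling the $\mathfrak m$-homogeneous socle component and showing it cannot be a nontrivial direct sum of simples requires combining the $T_1$ hypothesis (via Theorem \ref{t2.6}) with a careful look at which submodules of a semisimple module are second — this is where the argument is least routine and will need the full strength of both the finite length and the $T_1$ assumptions.
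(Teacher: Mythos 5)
Your first half is essentially the paper's argument, but you make the key step harder than it needs to be. The paper's route for showing $S=(0:_M\mathfrak m)$ where $\mathfrak m=Ann_R(S)$ is immediate: by \cite[1.4]{Y01} the submodule $(0:_M\mathfrak m)$ is \emph{itself} a second submodule of $M$ (it is a nonzero vector space over $R/\mathfrak m$), it contains $S$, and $S.dim(M)=0$ forbids a proper inclusion of second submodules; hence $S=(0:_M\mathfrak m)$. There is no need to analyze homogeneous socle components or to rule out pairs of simple submodules with the same annihilator --- that is all subsumed in the one-line observation above. So the ``delicate point'' you flag in your second paragraph is in fact the easy part.

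The genuine gap is your third paragraph. The paper does \emph{not} prove directly that every submodule of $M$ is of the form $(0:_MI)$; it invokes \cite[3.6]{AF1}, which says that a finite length \emph{weak} comultiplication module over a commutative Noetherian ring is a comultiplication module, so that only the second submodules need to be exhibited as annihilator submodules. You instead try to bootstrap from ``every simple submodule is $(0:_M\mathfrak m)$'' to ``every submodule is $(0:_MAnn_R(N))$'' by a downward induction using $(0:_MI_1)\cap(0:_MI_2)=(0:_M(I_1+I_2))$ and ``dualizing a composition series.'' This does not work as stated: intersecting annihilator submodules only produces submodules that are intersections of strictly larger ones, and meet-irreducible submodules (which abound in finite length cocyclic modules, e.g.\ in finite length submodules of an injective hull $E(R/\mathfrak m)$) cannot be reached this way. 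The passage from the socle level to arbitrary submodules is precisely the nontrivial content of the cited result (it ultimately rests on Matlis-duality-type arguments for cocyclic modules over Noetherian rings), and your sketch does not supply it. To repair the proposal, either quote \cite[3.6]{AF1} as the paper does, or give an actual proof of that reduction.
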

\begin{proof}
By \cite [3.6]{AF1}, it is enough to show that $M$ is a weak comultiplication $R$-module. To see this, let $S$ be a second submodule of $M$. Since $S$ is finitely cogenerated and $S.dim (M)=0$ by Theorem \ref{t2.6}, $S$ is a minimal submodule of $M$. Thus $Ann_R(S)=P$ is a maximal ideal of $R$. Since $S\subseteq (0:_MP)$ and $(0:_MP)$ is a second submodule of $M$ by \cite[1.4]{Y01}, $S=(0:_MP)$, as required.
\end{proof}

\begin{thm}\label{t2.11}
Let $M$ be an $R$-module. If either $R$ is an Artinian ring or $M$ is a Noetherian module, then $M$ has a minimal submodule if and only if $M$ has a second submodule. In addition if $M$ has a second submodule, then every second submodule of $M$ is a semisimple submodule of $M$.
\end{thm}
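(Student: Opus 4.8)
The plan is to reduce both assertions to a single claim: \emph{under either hypothesis, for every second submodule $N$ of $M$ the ideal $P := Ann_R(N)$ is maximal in $R$}. Once this is in hand, $N$ is a faithful module over the field $R/P$, hence a nonzero vector space over $R/P$, and therefore a semisimple $R$-module; this is exactly the ``in addition'' clause. For the equivalence, one direction uses no hypothesis at all: a minimal submodule of $M$ is simple, hence second by \cite[1.6]{Y01}, so if $M$ has a minimal submodule then it has a second submodule. For the converse I would take a second submodule $N$; by the claim it is a nonzero $R/P$-vector space, so it contains a one-dimensional subspace $L \cong R/P$, which is a simple, hence minimal, submodule of $M$.

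So the real content is the claim, and I would prove it in two steps. First, recall the standard fact that $P = Ann_R(N)$ is prime whenever $N$ is second: $P$ is proper since $N \neq 0$, and if $ab \in P$ with $b \notin P$ then $bN \neq 0$, so $bN = N$ because $N$ is second, whence $aN = abN = 0$, i.e. $a \in P$. Second, I would upgrade ``prime'' to ``maximal'' separately in the two cases. If $R$ is Artinian, then $R/P$ is an Artinian integral domain, hence a field, so $P$ is maximal. If instead $M$ is Noetherian, then $N$ is finitely generated; assume for contradiction that $D := R/P$ is not a field and choose a nonzero non-unit $\bar a \in D$, lifted to $a \in R$. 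Then $aN \neq 0$ (else $a \in P$), so $aN = N$ since $N$ is second, i.e. $\bar a N = N$. Since $N$ is a finitely generated $D$-module with $\bar a N = N$, the determinant trick produces $r \in D$ with $r \equiv 1 \pmod{\bar a D}$ and $rN = 0$; but $N$ is faithful over $D$, so $r = 0$ in $D$ and $\bar a$ is a unit, a contradiction. Hence $D$ is a field and $P$ is maximal.

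The hard part will be this last step, the Noetherian case: one has to use finite generation of $N$ to rule out a prime-but-non-maximal annihilator. The Nakayama/determinant argument above is the slick way to do it; equivalently, one could localize at a maximal ideal of $D = R/P$ containing $\bar a$ and apply Nakayama's lemma to the nonzero localization of the finitely generated faithful $D$-module $N$. The remaining ingredients — the reduction to the claim, the primeness of $Ann_R(N)$, and the Artinian case — are all routine.
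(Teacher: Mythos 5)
Your proposal is correct and follows essentially the same route as the paper: in both cases one shows that the annihilator of a second submodule is maximal (prime ideals are maximal in an Artinian ring; the determinant-trick/Nakayama argument handles the finitely generated case), so the second submodule is a vector space over $R/Ann_R(N)$, hence semisimple and containing a minimal submodule, while the converse is the cited fact that minimal submodules are second. Your write-up merely makes explicit the step the paper compresses into ``one can see that $S$ is a semisimple $R$-module.''
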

\begin{proof}
First assume that $R$ is an Artinian ring. Then every prime ideal of $R$ is maximal. Let $S$ be a second submodule of $M$. Then $Ann_R(S)$ is a maximal ideal of $R$. Thus $S$ is a semisimple $R/Ann_R(S)$-module. Hence $S$ has a minimal submodule and so $M$ has minimal submodule. Now let $M$ be a Noetherian $R$-module and $S$ be a second submodule of $M$. Since $S$ is finitely generated, one can see that $S$ is a semisimple $R$-module. Therefore, $S$ has a minimal submodule and so $M$ has a minimal submodule, as required.
\end{proof}

\begin{thm}\label{t2.12}
Let $M$ be an $R$-module. If either $R$ is an Artinian ring or $M$ is a Noetherian $R$-module, then $X^s(M)$ is a $T_1$-space if and only if either $X^s(M)=\emptyset$ or $X^s(M)=Min(M)$, where $Min(M)$ denotes the set of all
minimal submodules of $M$.
\end{thm}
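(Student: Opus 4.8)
The plan is to reduce everything to Theorem~\ref{t2.6}, which identifies the $T_1$ property of $X^s(M)$ with the numerical condition $S.\dim(M)\leq 0$, and then to match this condition with the stated dichotomy. Besides Theorem~\ref{t2.6}, the two inputs I would use are Theorem~\ref{t2.11} (this is where the standing hypothesis that $R$ is Artinian or $M$ is Noetherian enters) and the fact, already invoked in the proof of Corollary~\ref{c2.3}, that every minimal submodule of $M$ is a second submodule \cite[1.6]{Y01}. In particular $\mathrm{Min}(M)\subseteq X^s(M)$ always holds.

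For the ``if'' direction, suppose first $X^s(M)=\emptyset$; then $S.\dim(M)=-1\leq 0$, so $X^s(M)$ is $T_1$ by Theorem~\ref{t2.6}. Suppose instead $X^s(M)=\mathrm{Min}(M)$ and this set is nonempty; then every second submodule of $M$ is minimal, so there is no strict inclusion $S_1\subsetneq S_2$ of second submodules, whence $S.\dim(M)=0$ and again Theorem~\ref{t2.6} gives that $X^s(M)$ is $T_1$.

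For the ``only if'' direction, assume $X^s(M)$ is a $T_1$-space, so $S.\dim(M)\leq 0$ by Theorem~\ref{t2.6}. If $S.\dim(M)=-1$ then $X^s(M)=\emptyset$ and we are done; so assume $S.\dim(M)=0$, which means $X^s(M)\neq\emptyset$ and $V^{s*}(S)=\{S\}$ for every second submodule $S$ of $M$. The inclusion $\mathrm{Min}(M)\subseteq X^s(M)$ is automatic by \cite[1.6]{Y01}. For the reverse inclusion, take $S\in X^s(M)$; by Theorem~\ref{t2.11} the submodule $S$ is semisimple, hence it contains a minimal submodule $S'$, and $S'$ is itself a second submodule of $M$ by \cite[1.6]{Y01}. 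Since $S'\subseteq S$ and $S.\dim(M)=0$ forces $V^{s*}(S)=\{S\}$, we conclude $S'=S$, so $S\in\mathrm{Min}(M)$. Therefore $X^s(M)=\mathrm{Min}(M)$, as required.

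The argument is essentially bookkeeping around Theorems~\ref{t2.6} and~\ref{t2.11}; the only place where the hypothesis on $R$ or $M$ is genuinely used is the step ``$S$ second $\Rightarrow$ $S$ semisimple $\Rightarrow$ $S$ contains a minimal submodule'', which is exactly the content of Theorem~\ref{t2.11}. I do not anticipate any further obstacle.
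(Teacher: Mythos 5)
Your proof is correct and follows essentially the same route as the paper's: both reduce the $T_1$ condition to $S.\dim(M)\leq 0$ via Theorem~\ref{t2.6} and use Theorem~\ref{t2.11} to extract a minimal (hence second) submodule inside any given second submodule, forcing equality when $S.\dim(M)=0$. Your write-up is in fact slightly more explicit than the paper's at the step where the minimal submodule $S'$ of $S$ is observed to be second, which is what makes the dimension-zero argument close.
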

\begin{proof}
By Theorem \ref{t2.6}, $X^s(M)$ is a $T_1$-space if and only if $S.dim(M)\leq0$. First suppose that $S.dim(M)\leq0$. If $S.dim(M)=-1$, then $X^s(M)=\emptyset$. If $S.dim(M)=0$, then $X^s(M)\not = \emptyset$. Let $S$ be a second submodule of $M$. Then by Theorem \ref{t2.11}, $S$ has a minimal submodule. Since $S.dim(M)=0$, $S$ is a minimal submodule of $M$. Hence, $X^s(M)\subseteq Min(M)$. The reverse inclusion is clear. Now suppose that either $X^s(M)=\emptyset$ or $X^s(M)=Min(M)$. In the first case, $S.dim(M)=-1$. In the second case, $S.dim(M)=0$ and hence $X^s(M)$ is a $T_1$-space by Theorem \ref{t2.6}
\end{proof}

The cofinite topology is a
topology which can be defined on every set $X$. It has precisely the empty set and
all cofinite subsets of $X$ as open sets. As a consequence, in the cofinite topology,
the only closed subsets are finite sets, or the whole of $X$.
\begin{thm}\label{t2.13}
Let $M$ ba an $R$-module. Then the following are equivalent.
\begin{itemize}
  \item [(a)] $X^s(M)$ is the cofinite topology.
  \item [(b)] $S.dim(M)\leq0$ and for every submodule $N$ of $M$ either $V^{s*}(N)=X^s(M)$ or $V^{s*}(N)$ is finite.
\end{itemize}
\end{thm}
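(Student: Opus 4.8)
Recall that the closed sets of $X^s(M)$ in the second classical Zariski topology are, by definition, the arbitrary intersections of finite unions of sets of the form $V^{s*}(N)$. The plan is to show each implication directly from this description together with Theorem \ref{t2.6}, which identifies the condition $S.dim(M)\le 0$ with $X^s(M)$ being a $T_1$-space.

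For $(a)\Rightarrow(b)$: if $X^s(M)$ carries the cofinite topology, then in particular every singleton is closed, so $X^s(M)$ is $T_1$ and hence $S.dim(M)\le 0$ by Theorem \ref{t2.6}. For the second part, fix a submodule $N\le M$. The set $V^{s*}(N)$ is a closed set of $\eta^s(M)$ (it is one of the sub-basic closed sets). In the cofinite topology every closed set is either finite or equal to the whole space, so $V^{s*}(N)$ is finite or $V^{s*}(N)=X^s(M)$, which is exactly (b).

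For $(b)\Rightarrow(a)$: assume $S.dim(M)\le 0$ and the dichotomy on the $V^{s*}(N)$. I must show that a subset $C\subseteq X^s(M)$ is closed in $\eta^s(M)$ if and only if $C$ is finite or $C=X^s(M)$. First, suppose $C$ is closed, so $C=\bigcap_{i\in I}\big(\bigcup_{j=1}^{n_i}V^{s*}(N_{ij})\big)$. By hypothesis each $V^{s*}(N_{ij})$ is finite or equals $X^s(M)$; hence each finite union $\bigcup_{j=1}^{n_i}V^{s*}(N_{ij})$ is itself finite or equals $X^s(M)$. If all of these unions equal $X^s(M)$ then $C=X^s(M)$; otherwise at least one of them is finite, and then $C$, being contained in that finite set, is finite. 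Conversely I must check that every finite set is closed (then $X^s(M)$ and all finite sets are closed, which is precisely the assertion that $\eta^s(M)$ is the cofinite topology, since closure under the stated operations is automatic). It suffices to show each singleton $\{S\}$, $S\in X^s(M)$, is closed. Since $S.dim(M)\le 0$, by (the proof of) Theorem \ref{t2.6} we have $V^{s*}(S)=\{S\}$, so $\{S\}=V^{s*}(S)$ is sub-basic closed, and any finite set is a finite union of such singletons, hence closed.

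The only subtle point is the verification in $(b)\Rightarrow(a)$ that \emph{every} finite subset is closed: one needs $V^{s*}(S)=\{S\}$ for each second submodule $S$, which is where $S.dim(M)=0$ (equivalently $\le 0$) is used, exactly as in Theorem \ref{t2.6}; once singletons are closed the rest is the routine observation that finite unions and arbitrary intersections of closed sets are closed. I do not expect any genuine obstacle here — the argument is a direct unwinding of the definition of $\eta^s(M)$ against the definition of the cofinite topology, mediated by Theorem \ref{t2.6}.
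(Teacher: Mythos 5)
Your proof is correct and follows the same route as the paper: $(a)\Rightarrow(b)$ via Theorem \ref{t2.6} plus the observation that each $V^{s*}(N)$ is closed, and $(b)\Rightarrow(a)$ by unwinding the description of closed sets as $\cap_{i\in I}(\cup_{j=1}^{n_i}V^{s*}(N_{ij}))$. In fact your write-up is slightly more complete than the paper's: for $(b)\Rightarrow(a)$ the paper only argues that every closed set is finite or all of $X^s(M)$, and does not explicitly verify the converse inclusion that every finite subset is closed, which is needed to conclude that $\eta^s(M)$ \emph{equals} (rather than is contained in) the cofinite topology. You supply exactly this missing step by noting that $S.dim(M)\leq 0$ forces $V^{s*}(S)=\{S\}$ for each $S\in X^s(M)$, so singletons, and hence all finite sets, are closed; this is presumably what the paper intends by including $S.dim(M)\leq 0$ in condition (b), but your version makes the use of that hypothesis explicit.
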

\begin{proof}
$(a) \Rightarrow (b)$. Assume that $X^s(M)$ is the cofinite topology. Since every cofinite topology satisfies the $T_1$ axiom, $S.dim(M)\leq0$ by Theorem \ref{t2.6}. Now assume that there exists a submodule $N$ of $M$ such that $| V^{s*}(N)|=\infty$ and $V^{s*}(N)\not=X^s(M)$. Then $W^s(N)=X^s(M)-V^{s*}(N)$ is open in $X^s(M)$ with infinite complement, a contradiction.

$(b) \Rightarrow (a)$. Suppose that $S.dim(M)\leq 0$ and for every submodule $N$ of $M$, $V^{s*}(N)=X^s(M)$ or $| V^{s*}(N)|< \infty$. Then the complement of every open set in $X^s(M)$ is of the form $\cap_{i \in I}(\cup_{j=1}^nV^{s*}(N_{ij}))$ which is a finite set or $X^s(M)$ obviously.
\end{proof}

\begin{cor}\label{c2.14}
Let $M$ be an $R$-module such that $X^s(M)$ is finite. Then the following statements are equivalent.
\begin{itemize}
  \item [(a)] $X^s(M)$ is a Hausdorff space.
  \item [(b)] $X^s(M)$ is a $T_1$-space.
  \item [(c)] $X^s(M)$ is the cofinite topology.
  \item [(d)] $X^s(M)$ is discrete.
  \item [(e)] $S.dim(M)\leq 0$.
\end{itemize}
\end{cor}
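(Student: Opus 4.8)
The plan is to run all five conditions through a short cycle, leaning on the two substantive results already proved (Theorems \ref{t2.6} and \ref{t2.13}) together with a couple of elementary facts about \emph{finite} topological spaces. The only topological input I need is: on a finite set $X$, a topology is $T_1$ if and only if it is discrete (each point is closed, and a finite union of closed sets is closed, so every subset of $X$ is closed), and on a finite set the cofinite topology coincides with the discrete topology (every subset is finite, hence closed). With these in hand the argument is essentially bookkeeping.

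First I would establish $(b)\Leftrightarrow(e)$, which is immediate from Theorem \ref{t2.6} ($X^s(M)$ is $T_1$ iff $S.dim(M)\leq 0$) and uses nothing about finiteness. Next, $(b)\Leftrightarrow(d)$: since $X^s(M)$ is finite, a $T_1$ topology on it makes every singleton closed, hence every subset (a finite union of singletons) closed, so the space is discrete; the converse is trivial. Then $(d)\Rightarrow(a)$ because a discrete space is Hausdorff, and $(a)\Rightarrow(b)$ because every Hausdorff space is $T_1$. This closes the loop $(a)\Leftrightarrow(b)\Leftrightarrow(d)\Leftrightarrow(e)$.

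It remains to fit in $(c)$. Here I would apply Theorem \ref{t2.13}: its condition (b) reads ``$S.dim(M)\leq 0$ and for every submodule $N$ of $M$ either $V^{s*}(N)=X^s(M)$ or $V^{s*}(N)$ is finite''. But when $X^s(M)$ is finite, $V^{s*}(N)\subseteq X^s(M)$ is automatically finite, so the second clause is vacuous, and Theorem \ref{t2.13} collapses to ``$X^s(M)$ is the cofinite topology $\iff S.dim(M)\leq 0$'', i.e.\ $(c)\Leftrightarrow(e)$. (Alternatively one can simply note that on a finite set the cofinite topology is the discrete one, which gives $(c)\Leftrightarrow(d)$ directly.) Combining with the previous paragraph yields the full chain of equivalences.

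I do not expect any real obstacle: the proof is a matter of invoking Theorems \ref{t2.6} and \ref{t2.13} and the standard facts about finite spaces above. The one point to handle with a little care is not to assert the cofinite/discrete coincidence in the general (infinite) setting — it is legitimate here only because $X^s(M)$ is assumed finite — and, correspondingly, to check that the finiteness hypothesis is exactly what makes the extra clause in Theorem \ref{t2.13}(b) redundant.
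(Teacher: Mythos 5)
Your proof is correct, and it is exactly the argument the paper intends: the paper states Corollary \ref{c2.14} without proof, treating it as immediate from Theorems \ref{t2.6} and \ref{t2.13} together with the standard facts that a finite $T_1$-space is discrete and that the cofinite topology on a finite set is the discrete topology. Your careful note that the finiteness of $X^s(M)$ is what makes the second clause of Theorem \ref{t2.13}(b) vacuous is the right observation and fills in the only detail the paper leaves implicit.
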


\begin{lem}\label{l2.16}
Let $M$ be a finite length weak comultiplication module. Then $X^s(M)$ is a cofinite topology.
\end{lem}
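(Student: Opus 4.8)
The goal is to invoke Theorem \ref{t2.13}: it suffices to show that $S.\dim(M)\le 0$ and that for every submodule $N$ of $M$, either $V^{s*}(N)=X^s(M)$ or $V^{s*}(N)$ is finite. Since $M$ has finite length, the set of all submodules of $M$ is finite; in particular $X^s(M)$ itself is a finite set, so every subset $V^{s*}(N)$ is automatically finite. Thus the second clause of condition (b) of Theorem \ref{t2.13} holds trivially, and the entire burden is to establish $S.\dim(M)\le 0$, i.e.\ that $X^s(M)=\emptyset$ or every second submodule of $M$ is minimal among second submodules.

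First I would handle the case $X^s(M)=\emptyset$, where $S.\dim(M)=-1\le 0$ and we are done. So assume $S$ is a second submodule of $M$. Because $M$ is a weak comultiplication module, $S=(0:_M I)$ for some ideal $I$ of $R$. The plan is to show $S$ is minimal in the chain of second submodules: suppose $T\le S$ with $T$ a second submodule; I must show $T=S$. Here I would use that $S$ is a second module: by \cite[1.4]{Y01}-type facts, $\operatorname{Ann}_R(S)=P$ is a prime ideal, and $S$ is in fact a second submodule annihilated by $P$, so $S\subseteq (0:_M P)$. Since $(0:_M P)$ is a second submodule of $M$ (again by \cite[1.4]{Y01}), and since $S=(0:_M I)$ with $I\subseteq P$ forcing $(0:_M P)\subseteq (0:_M I)=S$, we get $S=(0:_M P)$. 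This identifies every second submodule of $M$ with a submodule of the form $(0:_M P)$ for $P$ a prime, and two such coincide when the primes do.

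The main obstacle, and the step to treat carefully, is ruling out a strict inclusion $T\subsetneq S$ of second submodules. Given such a $T$, with $\operatorname{Ann}_R(T)=Q$ prime, the same argument gives $T=(0:_M Q)$, and $T\subseteq S$ forces $\operatorname{Ann}_R(S)=P\subseteq Q$. Conversely, since $T\subseteq S\subseteq (0:_M P)$ and the finite-length hypothesis lets me pass to $M/\!\operatorname{Ann}_R(M)$ or localize at $P$, I would argue that $(0:_M P)$ is already minimal: a finite-length module has an Artinian underlying ring action, so $(0:_M P)$ is a semisimple $R/P$-module of finite length, hence a sum of copies of a simple $R/P$-module, and any nonzero second submodule of it must equal all of it (a proper nonzero second submodule would still be $(0:_M P)$ by the annihilator argument). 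Therefore $T=(0:_M Q)\supseteq$ or $=S$, giving $T=S$ and hence $S.\dim(M)\le 0$. Then Theorem \ref{t2.13} applies and $X^s(M)$ carries the cofinite topology. The only genuinely delicate point is confirming that a finite-length weak comultiplication module forces every prime annihilator of a second submodule to be maximal and the corresponding $(0:_M P)$ to be simple-isotypic; once that is in hand the rest is bookkeeping.
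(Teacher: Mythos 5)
There is a genuine gap at the very first step: you assert that ``since $M$ has finite length, the set of all submodules of $M$ is finite,'' and you use this to conclude that $X^s(M)$ is finite and that every $V^{s*}(N)$ is automatically finite. This is false: a two-dimensional vector space over an infinite field has length $2$ but infinitely many (simple, hence second) submodules. Finite length alone does not bound the number of second submodules; the weak comultiplication hypothesis is essential here, and your plan as written never feeds it into the finiteness claim. The paper avoids this by quoting \cite[3.4]{AF1}, which says precisely that a finite length weak comultiplication module has only finitely many second submodules and that each of them is minimal; the lemma then follows from Corollary \ref{c2.14} ($(e)\Rightarrow(c)$), which is essentially the same endgame as your appeal to Theorem \ref{t2.13}.

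The good news is that the gap is repairable from material you already develop. Your identification $S=(0:_M\operatorname{Ann}_R(S))$ for every second submodule $S$ (valid because $S=(0:_MI)$ for some $I\subseteq\operatorname{Ann}_R(S)$ forces $(0:_M\operatorname{Ann}_R(S))\subseteq(0:_MI)=S\subseteq(0:_M\operatorname{Ann}_R(S))$) shows that the map $S\mapsto\operatorname{Ann}_R(S)$ is injective on $X^s(M)$; since $M$ has finite length, each such prime $\operatorname{Ann}_R(S)$ is a maximal ideal lying in the finite set $\operatorname{Supp}(M)$, so $X^s(M)$ is finite and any two comparable second submodules have equal (maximal) annihilators, hence coincide. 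That gives both $|X^s(M)|<\infty$ and $S.\dim(M)\le 0$ directly, after which Theorem \ref{t2.13} (or Corollary \ref{c2.14}) finishes the proof. So your route is a legitimate self-contained alternative to citing \cite[3.4]{AF1}, but only once the erroneous ``finitely many submodules'' claim is replaced by the annihilator argument.
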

\begin{proof}
The result follows from Corollary \ref{c2.14} $(e)\Rightarrow (c)$ because $M$ has a finite number of second submodules and every second submodule of $M$ is minimal by \cite[3.4]{AF1}.
\end{proof}

\begin{cor}\label{c2.17}
Let $R$ be a Noetherian ring and $M$ be a finitely generated cocyclic $R$-module. Then $M$ is Artinian and $X^s(M)$ is a cofinite topology.
\end{cor}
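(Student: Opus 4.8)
The plan is to deduce this from Lemma~\ref{l2.16}: I would show that, under the stated hypotheses, $M$ is automatically a finite length weak comultiplication module, and then quote that lemma.

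First I would record what cocyclicity provides. Since $M$ is cocyclic, the intersection $C$ of all its nonzero submodules is nonzero; $C$ is then simple (any nonzero submodule of $C$ is a nonzero submodule of $M$, hence contains $C$) and is contained in every nonzero submodule, so $soc(M)=C$ is simple and essential in $M$. Write $soc(M)\cong R/\mathfrak{m}$ with $\mathfrak{m}$ a maximal ideal. Next I would prove $M$ has finite length, which in particular makes $M$ Artinian and settles the first assertion. As $M$ is finitely generated over the Noetherian ring $R$, it is Noetherian. If $\mathfrak{p}\in Ass(M)$, an embedding $R/\mathfrak{p}\hookrightarrow M$ has nonzero image, which therefore contains the simple essential socle, so $R/\mathfrak{m}\hookrightarrow R/\mathfrak{p}$; but every nonzero submodule of the domain $R/\mathfrak{p}$ has annihilator exactly $\mathfrak{p}$, while $Ann_R(R/\mathfrak{m})=\mathfrak{m}$, forcing $\mathfrak{p}=\mathfrak{m}$. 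Hence $Ass(M)=\{\mathfrak{m}\}$, and since $\mathfrak{m}$ is maximal this gives $Supp(M)=\{\mathfrak{m}\}$, so $\sqrt{Ann_R(M)}=\mathfrak{m}$ and (by Noetherianness of $R$) $\mathfrak{m}^k\subseteq Ann_R(M)$ for some $k$; thus $M$ is a finitely generated module over the Artinian ring $R/\mathfrak{m}^kR$ and so has finite length. (Alternatively, one may just invoke that $E(R/\mathfrak{m})$ is Artinian over a Noetherian ring, so the submodule $M$ is Artinian, hence of finite length being also Noetherian.)

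Finally I would verify that $M$ is a weak comultiplication module. Let $S$ be a second submodule of $M$; being nonzero it contains $soc(M)$. For $r\in\mathfrak{m}$, the map $S\stackrel{r}{\to}S$ is surjective or zero; if it were surjective then, $S$ being finitely generated (hence Noetherian), it would be bijective, contradicting that $r$ annihilates the nonzero submodule $soc(M)\subseteq S$. Hence $\mathfrak{m}S=0$, i.e. $S\subseteq(0:_M\mathfrak{m})$. But $(0:_M\mathfrak{m})$ is an $R/\mathfrak{m}$-module, hence a semisimple $R$-module, hence contained in $soc(M)$; together with $soc(M)\subseteq(0:_M\mathfrak{m})$ this gives $(0:_M\mathfrak{m})=soc(M)$, which is simple, so $S=soc(M)=(0:_M\mathfrak{m})$. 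Thus every second submodule of $M$ has the form $(0:_M I)$, so $M$ is weak comultiplication, and Lemma~\ref{l2.16} yields that $X^s(M)$ carries the cofinite topology.

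The only real content is the finite-length reduction in the second step (the computation $Ass(M)=\{\mathfrak{m}\}$, or equivalently the Artinianness of $E(R/\mathfrak{m})$ over a Noetherian ring); everything afterward is a short manipulation exploiting that $M$ has simple essential socle. One should also dispatch the degenerate case $M=0$, which is excluded by cocyclicity but, if allowed, makes $X^s(M)=\emptyset$ and the statement vacuous.
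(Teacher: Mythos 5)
Your proposal is correct, and it reaches the conclusion by a more self-contained route than the paper. The paper's proof is two citations: it invokes Vamos's theorem \cite{va65} to get that a finitely generated cocyclic module over a Noetherian ring is Artinian, and \cite[2.5]{AF13} to get that $M$ is a comultiplication module, after which Lemma \ref{l2.16} applies. You instead prove from scratch the two hypotheses that Lemma \ref{l2.16} actually needs: finite length (via $Ass(M)=\{\mathfrak{m}\}$, hence $Supp(M)=\{\mathfrak{m}\}$ and $\mathfrak{m}^k\subseteq Ann_R(M)$, reducing to a finitely generated module over an Artinian ring --- this is essentially a proof of the Vamos result in the case at hand), and the weak comultiplication property, which is strictly weaker than what \cite[2.5]{AF13} provides but is all the lemma uses. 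Your verification of the latter is clean: a second submodule $S$ contains the simple essential socle $C\cong R/\mathfrak{m}$, multiplication by $r\in\mathfrak{m}$ cannot be surjective on the Noetherian module $S$ because it kills $C$, so $\mathfrak{m}S=0$ and $S=(0:_M\mathfrak{m})=C$. Note that this argument in fact shows $X^s(M)=\{C\}$ is a single point, so the cofinite-topology conclusion is immediate even without Lemma \ref{l2.16}; you could state that as a sharpening. The trade-off is length versus dependence: the paper's proof is three lines but leans on two external results, while yours is longer but uses only standard commutative algebra (associated primes, Matlis-type facts) plus the one internal lemma. Both are valid.
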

\begin{proof}
By \cite{va65}, $M$ is Artinian. Also, $M$ is a comultiplication $R$-module \cite[2.5]{AF13}. Now the result follows from the above Lemma.
\end{proof}

The following example shows that the converse of the above corollary is not true in general.
\begin{ex}\label{e2.18}
Consider $M=\Bbb Z_6$ as a $\Bbb Z$-module. Then $M$ is an Artinian $\Bbb Z$-module and $X^s(M)$ is a cofinite topology but $M$ is not a cocyclic $\Bbb Z$-module.
\end{ex}

\begin{thm}\label{t2.19}
Let $M$ be an $R$-module with $| X^s(M)|\geq 2$. If $X^s(M)$ is a Hausdorff space, then $S.dim(M)=0$ and there exist submodules $N_1, N_2,..., N_n$ of $M$ such that $V^{s*}(N_i)\not =X^s(M)$, for all $i$, and $X^s(M)=\cup_{i=1}^nV^{s*}(N_i)$.
\end{thm}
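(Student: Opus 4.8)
The plan is to treat the two conclusions separately. The first one is immediate: a Hausdorff space is in particular a $T_1$-space, and $|X^s(M)|\geq 2$ forces $X^s(M)\neq\emptyset$. Hence Theorem \ref{t2.6} gives $S.dim(M)\leq 0$, while $X^s(M)\neq\emptyset$ rules out $S.dim(M)=-1$; therefore $S.dim(M)=0$.

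For the covering statement, I would exploit the Hausdorff axiom to manufacture a finite family of \emph{proper} closed sets of the form $V^{s*}(N_i)$ that covers $X^s(M)$. Pick distinct points $S_1,S_2\in X^s(M)$ and, using that $X^s(M)$ is Hausdorff, disjoint open sets $U_1\ni S_1$ and $U_2\ni S_2$. Since $\Omega^s(M)=\{W^s(N):N\leq M\}$ is a sub-basis for $\eta^s(M)$, inside each $U_k$ there is a basic open neighbourhood of $S_k$ of the form $B_k=W^s(N_{k1})\cap\cdots\cap W^s(N_{km_k})$ with $B_k\subseteq U_k$. Then $B_1\cap B_2\subseteq U_1\cap U_2=\emptyset$; moreover $S_1\in B_1$ and $S_2\in B_2$, so $B_1$ and $B_2$ are nonempty, and since their intersection is empty neither can equal $X^s(M)$, which in particular means $m_k\geq 1$ for $k=1,2$.

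Now I would pass to complements, using that the complement of $W^s(N)$ in $X^s(M)$ is $V^{s*}(N)$ and that the complement of a finite intersection is the union of the complements:
\[
X^s(M)=\bigl(X^s(M)\setminus B_1\bigr)\cup\bigl(X^s(M)\setminus B_2\bigr)=\bigcup_{t=1}^{m_1}V^{s*}(N_{1t})\ \cup\ \bigcup_{t=1}^{m_2}V^{s*}(N_{2t}).
\]
Relabelling the $m_1+m_2$ submodules $N_{kt}$ as $N_1,\dots,N_n$ yields $X^s(M)=\bigcup_{i=1}^n V^{s*}(N_i)$. Finally each summand is proper: from $S_k\in B_k\subseteq W^s(N_{kt})$ we get $W^s(N_{kt})\neq\emptyset$, i.e.\ $V^{s*}(N_{kt})\neq X^s(M)$, for every $t$.

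The step I expect to require the most care is precisely this last one — ensuring that no $V^{s*}(N_i)$ accidentally equals $X^s(M)$, equivalently that shrinking $U_k$ to a basic open set does not introduce a trivial factor $W^s(N_{kt})=\emptyset$. This is ruled out exactly by the condition $S_k\in B_k$, which forces every sub-basic factor of $B_k$ to be nonempty. Apart from this bookkeeping and a routine application of De Morgan's law to the sub-basis $\Omega^s(M)$, I do not anticipate any genuine obstacle.
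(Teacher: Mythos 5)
Your argument is correct: the first claim follows from Theorem \ref{t2.6} exactly as you say, and the separation-plus-De Morgan argument (separating two points by disjoint basic open sets built from the sub-basis $\Omega^s(M)$, then taking complements) is precisely the standard proof that the paper invokes by referring to the analogous result \cite[2.26]{BH08}. Your extra care in checking that each $V^{s*}(N_{kt})\neq X^s(M)$ via $S_k\in B_k\subseteq W^s(N_{kt})$ is exactly the right bookkeeping, so there is nothing to add.
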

\begin{proof}
The proof is similar to that of \cite[2.26]{BH08}.
\end{proof}

Maximal second submodules are defined in a natural way. By Zorn's Lemma one can easily see that each second submodule of a module $M$ is contained in a maximal second submodule of $M$ \cite{AF12}. In \cite{AF12} and \cite{AF1}, it is shown that Artinian modules and Noetherian modules contain only finitely many maximal second submodules.
\begin{cor}\label{c2.20}
Let $M$ be an Artinian $R$-module. Then the following statements are equivalent.
\begin{itemize}
  \item [(a)] $X^s(M)$ is a Hausdorff space.
  \item [(b)] $X^s(M)$ is a $T_1$-space.
  \item [(c)] $X^s(M)$ is the cofinite topology.
  \item [(d)] $X^s(M)$ is discrete.
  \item [(e)] $X^s(M)=Min(M)$.
  \end{itemize}
\end{cor}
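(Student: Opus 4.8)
The plan is to establish $(a)\Rightarrow(b)$, $(c)\Rightarrow(b)$, $(d)\Rightarrow(b)$, $(b)\Rightarrow(e)$, and $(e)\Rightarrow(a)\wedge(c)\wedge(d)$; chained together these yield the full equivalence. The first three implications are purely topological and immediate, since a Hausdorff space, a space carrying the cofinite topology, and a discrete space are each $T_1$. So the real content lies in $(b)\Rightarrow(e)$ and in $(e)\Rightarrow(a)\wedge(c)\wedge(d)$, the latter of which will force $X^s(M)$ to be finite so that Corollary \ref{c2.14} can be invoked.

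For $(b)\Rightarrow(e)$, I would start from the fact that every minimal submodule of $M$ is second (\cite[1.6]{Y01}), so $Min(M)\subseteq X^s(M)$ unconditionally. Assuming $X^s(M)$ is $T_1$, Theorem \ref{t2.6} yields $S.dim(M)\leq 0$. Given any $S\in X^s(M)$, the module $S$ is Artinian (a submodule of the Artinian module $M$) and hence contains a minimal submodule $L$; $L$ is second, and if $L\subsetneq S$ this would be a chain of second submodules of length $1$, contradicting $S.dim(M)\leq 0$. Thus $L=S$, so $S\in Min(M)$, giving $X^s(M)\subseteq Min(M)$ and therefore $X^s(M)=Min(M)$. (If $X^s(M)=\emptyset$ then $Min(M)=\emptyset$ as well, so (e) still holds.)

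For $(e)\Rightarrow(a)\wedge(c)\wedge(d)$, assume $X^s(M)=Min(M)$. Then every second submodule of $M$ is minimal, so no chain $S_1\subsetneq S_2$ of second submodules exists and therefore $S.dim(M)\leq 0$. The step I expect to be the crux is showing that $X^s(M)$ is \emph{finite}; for this I would use the fact, recalled just before the statement, that an Artinian module has only finitely many maximal second submodules. By hypothesis each maximal second submodule is already minimal, and conversely each minimal submodule, being second, is contained in --- hence equal to --- a maximal second submodule; thus $X^s(M)=Min(M)$ coincides with the (finite) set of maximal second submodules. With $X^s(M)$ finite and $S.dim(M)\leq 0$, Corollary \ref{c2.14} immediately gives that $X^s(M)$ is Hausdorff, carries the cofinite topology, and is discrete, i.e.\ (a), (c), (d) (and also (b)).

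Combining these: $(a)$, $(c)$, $(d)$ each imply $(b)$; $(b)$ implies $(e)$; and $(e)$ implies $(a)$, $(b)$, $(c)$, $(d)$. Hence (a)--(e) are all equivalent. The only nonroutine ingredients are Theorem \ref{t2.6}, Corollary \ref{c2.14}, and the finiteness of the set of maximal second submodules of an Artinian module; the Artinian hypothesis is used exactly twice --- to produce a minimal submodule inside an arbitrary second submodule, and to bound the number of maximal second submodules.
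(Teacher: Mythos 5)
Your proposal is correct and uses exactly the same ingredients as the paper's proof (Theorem \ref{t2.6}, Corollary \ref{c2.14}, the finiteness of the set of maximal second submodules of an Artinian module, and the existence of a minimal submodule inside any second submodule of an Artinian module); the only difference is the cosmetic one of routing the implications through the hubs (b) and (e) rather than in a single cycle $(a)\Rightarrow(b)\Rightarrow(c)\Rightarrow(d)\Rightarrow(e)\Rightarrow(a)$. No gaps.
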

\begin{proof}
First we note that since $M$ is Artinian, $X^s(M)$ is not empty.

$(a)\Rightarrow (b)$. This is clear.

$(b)\Rightarrow (c)$. By Theorem \ref{t2.6}, $S.dim(M)\leq 0$. Thus every second submodule of $M$ is a maximal second submodule of $M$. As $M$ is Artinian, it contains only finitely many maximal second submodules by \cite[2.6]{AF12}. Therefore, $X^s(M)$ is finite. Hence, $X^s(M)$ is a cofinite topology by Corollary \ref{c2.14}.

$(c)\Rightarrow (d)$. By Theorem \ref{t2.13}, $S.dim(M)\leq0$ so that, as we saw in the proof of $(b)\Rightarrow (c)$, $X^s(M)$ is finite. Now the result follows from the corollary \ref{c2.14}.

$(d)\Rightarrow (e)$. Since $X^s(M)$ is a $T_1$-space, by Theorem \ref{t2.6}, $S.dim(M)=0$. Since $M$ is Artinian, every second submodule of $M$ contains a minimal submodule of $M$. Therefore, every second submodule of $M$ is minimal so that $X^s(M)\subseteq Min(M)$. The reverse inclusion follows from the fact that every minimal submodule of $M$ is second by \cite[1.6]{Y01}.

$(e)\Rightarrow (a)$. Since $M$ is Artinian, $M$ contains only a finite number of maximal second submodule by \cite[2.6]{AF12}. As $X^s(M)=Min(M)$, every maximal second submodule of $M$ is minimal. Therefore, $S.dim(M)\leq 0$ and $X^s(M)$ is finite. Now the result follows from Corollary \ref{c2.14}.
\end{proof}

\begin{thm}\label{t3.75}
Let $M$ be a Noetherian $R$-module. Then the following statements are equivalent.
\begin{itemize}
  \item [(a)] $X^s(M)$ is a Hausdorff space.
  \item [(b)] $X^s(M)$ is a $T_1$-space.
  \item [(c)] $X^s(M)$ is the cofinite topology.
  \item [(d)] $X^s(M)$ is discrete.
  \item [(e)] Either $X^s(M)=\emptyset$ or $X^s(M)=Min(M)$.
\end{itemize}
\end{thm}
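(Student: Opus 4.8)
The plan is to follow the pattern of the proof of Corollary~\ref{c2.20}, but with the Artinian hypothesis replaced by the Noetherian one; accordingly, the input ``an Artinian module has nonempty $X^s(M)$'' is dropped and its role is taken over by the extra alternative $X^s(M)=\emptyset$ appearing in (e). The backbone of the argument is Theorem~\ref{t2.6} ($X^s(M)$ is $T_1$ iff $S.dim(M)\leq 0$) together with Theorem~\ref{t2.12}, which for a Noetherian module already delivers (b)$\Leftrightarrow$(e). Thus the only genuine work is to show that (e) forces $X^s(M)$ to be finite; once that is in hand, Corollary~\ref{c2.14} collapses (a), (c), (d) together with the condition $S.dim(M)\leq0$, and the cycle closes.

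First I would record the purely topological implications, which need no module theory: (a)$\Rightarrow$(b) because every Hausdorff space is $T_1$; (c)$\Rightarrow$(b) because the cofinite topology always satisfies the $T_1$ axiom; and (d)$\Rightarrow$(b) because a discrete space is $T_1$. Next, (b)$\Rightarrow$(e) is precisely Theorem~\ref{t2.12}, using that $M$ is Noetherian. It therefore suffices to establish (e)$\Rightarrow$(a), (e)$\Rightarrow$(c) and (e)$\Rightarrow$(d).

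Assume (e). If $X^s(M)=\emptyset$, then the empty space is simultaneously Hausdorff, discrete, and cofinite, so (a), (c), (d) all hold. Suppose instead $X^s(M)=Min(M)\neq\emptyset$. For a minimal submodule $S$ the only submodules of $M$ contained in $S$ are $0$ and $S$, and $0$ is not second, so $V^{s*}(S)=\{S\}$. Since every member of $X^s(M)$ is minimal, this shows $S.dim(M)\leq0$ and, moreover, that no second submodule properly contains another, i.e. every second submodule of $M$ is a maximal second submodule. As $M$ is Noetherian, $M$ has only finitely many maximal second submodules (as recalled just before Corollary~\ref{c2.20}), hence $X^s(M)$ is finite. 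Now Corollary~\ref{c2.14} applies with $|X^s(M)|<\infty$ and $S.dim(M)\leq0$, yielding that $X^s(M)$ is Hausdorff, discrete, and the cofinite topology; this gives (a), (c), (d) and completes the equivalences.

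The only mildly delicate points are the bookkeeping in the empty case and the observation that $X^s(M)=Min(M)$ simultaneously forces $S.dim(M)\leq0$ and the maximality of each second submodule among the second submodules; everything else is a direct assembly of Theorems~\ref{t2.6} and \ref{t2.12}, Corollary~\ref{c2.14}, and the finiteness of the set of maximal second submodules of a Noetherian module.
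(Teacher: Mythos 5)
Your proof is correct and follows essentially the same route as the paper's: Theorem~\ref{t2.12} links the $T_1$ property to condition (e), the finiteness of the set of maximal second submodules of a Noetherian module yields $|X^s(M)|<\infty$, and Corollary~\ref{c2.14} then collapses the remaining conditions. The only difference is organizational (you use (b) and (e) as hubs rather than running a single cycle of implications), which does not change the substance.
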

\begin{proof}
$(a)\Rightarrow (b)$. This is clear.

$(b)\Rightarrow (c)$. Let $X^s(M)$ be a $T_1$-space. By Theorem \ref{t2.12}, either $X^s(M)=\emptyset$ or $X^s(M)=Min(M)$. Let $X^s(M) \not =\emptyset$. Then by using \cite[2.2]{AF12}, every second submodule of $M$ is a maximal second submodule of $M$. Since $M$ is Noetherian, it has a finite number of maximal second submodules by \cite[2.4]{AF1}. Thus, $X^s(M)$ is finite and so by Corollary \ref{c2.14}, $X^s(M)$ is the cofinite topology.

$(c)\Rightarrow (d)$. Assume that $X^s(M)$ is the cofinite topology. Then by Theorem \ref{t2.13}, $S.dim(M)\leq 0$. Now as we see in the proof of $(b)\Rightarrow (c)$, $X^s(M)$ is finite. Therefore, $X^s(M)$ is discrete by Corollary \ref{c2.14}.

$(d)\Rightarrow (e)$. This follows from Theorem \ref{t2.12}

$(e)\Rightarrow (a)$. If $X^s(M)=Min(M)$, then by using \cite[2.2]{AF12}, every second submodule of $M$ is a maximal second submodule of $M$. As $M$ is Noetherian, $M$ has a finite number of maximal second submodules. Therefore, $X^s(M)$ is finite. Now by Corollary \ref{c2.14}, $X^s(M)$ is a Hausdorff space.
\end{proof}

\begin{lem}\label{c2.22}
Let $M$ be a second module. Then $X^s(M)$ is $T_1$-space if and only if $M$ is the only second submodule of $M$.
\end{lem}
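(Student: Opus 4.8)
The plan is to apply the already-established criterion in Theorem~\ref{t2.6}: for \emph{any} $R$-module $M$, the space $X^s(M)$ is a $T_1$-space if and only if $S.dim(M)\leq 0$. Since $M$ is itself a second module, $M\in X^s(M)$, so $X^s(M)\neq\emptyset$ and hence $S.dim(M)\geq 0$; therefore $X^s(M)$ is a $T_1$-space precisely when $S.dim(M)=0$, i.e. when there is no chain $S_1\subsetneq S_2$ of second submodules of $M$ of length $1$.

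First I would prove the ``if'' direction. Suppose $M$ is the only second submodule of $M$. Then $X^s(M)=\{M\}$ is a one-point space, which is trivially $T_1$ (indeed discrete); equivalently, every chain of second submodules is trivial, so $S.dim(M)=0$ and Theorem~\ref{t2.6} applies. Either way the conclusion is immediate.

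For the ``only if'' direction, assume $X^s(M)$ is a $T_1$-space, so $S.dim(M)=0$ by Theorem~\ref{t2.6}. Let $S$ be any second submodule of $M$. Because $M$ is a second module, $M$ itself is a second submodule of $M$ containing $S$, so $S\subseteq M$ gives a chain of second submodules; since $S.dim(M)=0$ this chain cannot have length $\geq 1$, forcing $S=M$. Hence $M$ is the only second submodule of $M$.

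The only point that requires a little care is making sure $M$ genuinely lies in $X^s(M)$ — that is, that a ``second module'' is by definition a nonzero module each of whose scalar multiplications is surjective or zero, which is exactly the condition for $M$ to be a second submodule of itself; this is immediate from the definition recalled in the introduction. Aside from that bookkeeping, there is no real obstacle: the statement is essentially a direct specialization of Theorem~\ref{t2.6} combined with the observation that in a second module every second submodule sits below the top element $M$ of $X^s(M)$.
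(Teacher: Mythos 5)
Your argument is correct and is exactly the route the paper takes: the paper's proof of this lemma is simply ``This follows from Theorem \ref{t2.6},'' and your write-up supplies the details of that deduction (namely that $M\in X^s(M)$ forces $S.dim(M)\geq 0$, and that every second submodule lies under the second submodule $M$, so $S.dim(M)=0$ iff $M$ is the only second submodule).
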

\begin{proof}
This follows from Theorem \ref{t2.6}.
\end{proof}
\begin{lem}\label{l3.1}
Let $f: \acute{M} \rightarrow M$ be an $R$-module monomorphism, and let $N$ be a
submodule of $M$ such that $N \subseteq f(\acute{M})$. Then $V^{s*}(N)\rightarrow V^{s*}(f^{-1}(N))$, given by $S\rightarrow f^{-1}(S)$ is a bijection. If $V^{s*}(N)= \emptyset$, then so is $V^{s*}(f^{-1}(N))$.
\end{lem}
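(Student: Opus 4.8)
The plan is to exhibit $T \mapsto f(T)$ as a two-sided inverse of $S \mapsto f^{-1}(S)$, so that the proof reduces to three routine verifications: well-definedness of $S \mapsto f^{-1}(S)$, injectivity, and surjectivity. The structural fact I would isolate first is that, $f$ being a monomorphism, $f^{-1}(f(T)) = T$ for every submodule $T \leq \acute{M}$, while $f(f^{-1}(S)) = S \cap f(\acute{M})$ for every submodule $S \leq M$; in particular $f(f^{-1}(S)) = S$ whenever $S \subseteq f(\acute{M})$. It follows that $f$ restricts to an $R$-module isomorphism from $f^{-1}(S)$ onto $S$ (and from $T$ onto $f(T)$). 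I would then record that both the property of being nonzero and the property of being a second submodule transfer along any $R$-module isomorphism $\phi\colon L \to L'$: for $a \in R$ one has $aL' = \phi(aL)$, hence $aL' = L'$ iff $aL = L$ and $aL' = 0$ iff $aL = 0$, which is exactly the defining condition.

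With these in hand, well-definedness is immediate: for $S \in V^{s*}(N)$ we have $S \subseteq N \subseteq f(\acute{M})$, so $f^{-1}(S)$ is isomorphic to $S$ via $f$, hence a second submodule of $\acute{M}$, and it is nonzero because $f(f^{-1}(S)) = S \neq 0$; moreover $f^{-1}(S) \subseteq f^{-1}(N)$, so $f^{-1}(S) \in V^{s*}(f^{-1}(N))$. Injectivity follows by applying $f$ to an equality $f^{-1}(S_1) = f^{-1}(S_2)$, which yields $S_1 = S_2$ since $S_1, S_2 \subseteq f(\acute{M})$. For surjectivity, given $T \in V^{s*}(f^{-1}(N))$ I would set $S := f(T)$; then $S$ is a nonzero second submodule of $M$ with $S = f(T) \subseteq f(f^{-1}(N)) = N \cap f(\acute{M}) = N$, so $S \in V^{s*}(N)$, and $f^{-1}(S) = f^{-1}(f(T)) = T$. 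This gives the bijection.

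The final assertion I would prove contrapositively: if $V^{s*}(f^{-1}(N))$ contained some $T$, then the surjectivity computation shows $f(T) \in V^{s*}(N)$, contradicting $V^{s*}(N) = \emptyset$. I do not anticipate a real obstacle; the only point worth stating carefully is which identity needs which hypothesis — $f(f^{-1}(S)) = S$ uses $S \subseteq f(\acute{M})$ (available because $S \subseteq N \subseteq f(\acute{M})$), while $f^{-1}(f(T)) = T$ uses only that $f$ is injective.
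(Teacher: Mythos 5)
Your proof is correct and complete; the paper itself dismisses this lemma with ``It is straightforward,'' and your argument---transporting the second-submodule property along the isomorphism $f^{-1}(S)\cong S$ induced by the monomorphism, and checking $f(f^{-1}(S))=S$ under $S\subseteq N\subseteq f(\acute{M})$ versus $f^{-1}(f(T))=T$ from injectivity---is exactly the intended routine verification, with the hypotheses correctly tracked at each step.
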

\begin{proof}
It is straightforward.
\end{proof}

\begin{thm}\label{t3.2}
Let $f:\acute{M} \rightarrow M$ be an $R$-module homomorphism. Define $\varphi: \Omega^s({M})\rightarrow
\Omega^s(\acute M)$ by $\varphi(\cap_{i \in I}(\cup_{j=1}^{n_i}V^{s*}(N_{ij})))=\cap_{i \in I}(\cup_{j=1}^{n_i}V^{s*}(f^{-1}(N_{ij})))$, where $I$ is an index set, $n_i \in \Bbb N$ and $N_{ij}\leq M$. Then $\varphi$ is a well-defined map.
\end{thm}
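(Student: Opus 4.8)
The plan is to show that the value assigned by $\varphi$ to a subset of $X^s(M)$ of the form $\bigcap_{i\in I}\big(\bigcup_{j=1}^{n_i}V^{s*}(N_{ij})\big)$ depends only on that subset and not on the particular presentation chosen for it. Concretely, I would fix two presentations
$$
\bigcap_{i \in I}\Big(\bigcup_{j=1}^{n_i}V^{s*}(N_{ij})\Big)=\bigcap_{k \in K}\Big(\bigcup_{l=1}^{m_k}V^{s*}(L_{kl})\Big)
$$
of the same subset of $X^s(M)$ (with $N_{ij},L_{kl}\le M$, $I,K$ index sets, $n_i,m_k\in\Bbb N$) and prove that the corresponding subsets $\bigcap_{i}(\bigcup_j V^{s*}(f^{-1}(N_{ij})))$ and $\bigcap_{k}(\bigcup_l V^{s*}(f^{-1}(L_{kl})))$ of $X^s(\acute M)$ coincide; by symmetry it is enough to establish one inclusion.

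Before that I would record two elementary facts. First, a homomorphism sends a second submodule to $0$ or again to a second submodule: if $\acute S$ is second in $\acute M$ then for each $a\in R$ we have $a f(\acute S)=f(a\acute S)$, which equals $f(\acute S)$ or $0$ according as $a\acute S=\acute S$ or $a\acute S=0$; hence $f(\acute S)$ is either zero or a second submodule of $M$. Second, the adjunction $\acute S\subseteq f^{-1}(N)\iff f(\acute S)\subseteq N$ holds for every $N\le M$, so $\acute S\in V^{s*}(f^{-1}(N))$ precisely when $f(\acute S)\subseteq N$.

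Now I would take $\acute S\in\bigcap_{i}(\bigcup_j V^{s*}(f^{-1}(N_{ij})))$ and split into two cases. If $f(\acute S)=0$, then $f(\acute S)\subseteq L$ for every submodule $L$ of $M$, so $\acute S\in V^{s*}(f^{-1}(L_{k1}))\subseteq\bigcup_l V^{s*}(f^{-1}(L_{kl}))$ for every $k\in K$ (each index set $\{1,\dots,m_k\}$ being nonempty), hence $\acute S$ lies in the target set. If $f(\acute S)\ne 0$, then $f(\acute S)$ is a second submodule of $M$ by the first fact; for each $i$ there is $t$ with $\acute S\subseteq f^{-1}(N_{it})$, i.e. $f(\acute S)\subseteq N_{it}$, i.e. $f(\acute S)\in V^{s*}(N_{it})$, so $f(\acute S)\in\bigcap_i(\bigcup_j V^{s*}(N_{ij}))=\bigcap_k(\bigcup_l V^{s*}(L_{kl}))$; reading this off the second presentation yields, for each $k$, an index $s$ with $f(\acute S)\subseteq L_{ks}$, hence $\acute S\subseteq f^{-1}(L_{ks})$ and $\acute S\in\bigcup_l V^{s*}(f^{-1}(L_{kl}))$. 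Therefore $\acute S\in\bigcap_k(\bigcup_l V^{s*}(f^{-1}(L_{kl})))$, which gives the desired inclusion and, by symmetry, well-definedness of $\varphi$.

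The only point that needs genuine care — the ``main obstacle'', such as it is — is the behaviour of the second submodules $\acute S$ of $\acute M$ with $f(\acute S)=0$: these are absorbed into \emph{every} subset of $X^s(\acute M)$ of the prescribed form, so they can never detect a dependence on the presentation; once this is isolated, the remainder is a routine pull-back/push-forward bookkeeping driven entirely by the adjunction $f(\acute S)\subseteq N\iff\acute S\subseteq f^{-1}(N)$. I would also remark in passing that this argument does not require $f$ to be injective, in contrast to Lemma \ref{l3.1}.
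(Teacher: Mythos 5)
Your proposal is correct and follows essentially the same route as the paper's own proof: the same case split on whether the second submodule $\acute S$ satisfies $f(\acute S)=0$ (i.e.\ $\acute S\subseteq \mathrm{Ker}(f)$), the same use of the fact that the image of a second submodule is zero or second, and the same push-forward/pull-back via $f(\acute S)\subseteq N\iff \acute S\subseteq f^{-1}(N)$, concluding by symmetry. No further comment is needed.
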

\begin{proof}
Suppose that $\cap_{i \in I}(\cup_{j=1}^{n_i} V^{s*}(N_{ij}))=\cap_{t \in T}(\cup_{j=1}^{n_t}V^{s*}(K_{tj}))$, where $N_{ij} , K_{tj} \leq M$ and $I, T$ are index sets. We show that
$$
\cap_{i \in I}(\cup_{j=1}^{n_i} V^{s*}(f^{-1}(N_{ij})))=\cap_{t \in T}(\cup_{j=1}^{n_t}V^{s*}(f^{-1}(K_{tj})))    \\\ (1)
$$
Let $S \in \cap_{i \in I}(\cup_{j=1}^{n_i} V^{s*}(f^{-1}(N_{ij})))$. Then for each $i \in I$, there exists $j_i$ ($1\leq j_i\leq n_i$) such that $S \in V^{s*}(f^{-1}(N_{ij_i}))$. If $S\subseteq Ker(f)$, then for each $t \in T$, and each $j$ ($1\leq j\leq n_t$) we have $S \subseteq f^{-1}(K_{tj})$. It follows that $S \in \cap_{t \in T}(\cup_{j=1}^{n_t}V^{s*}(f^{-1}(K_{tj})))$.
Now let $S \not \subseteq Ker(f)$. Then $f(S)$ is a second submodule of $M$. Hence for each $i \in I$, $f(S) \in V^{s*}(N_{ij})$. Thus $f(S) \in \cap _{i \in I}(\cup_{j=1}^{n_i}V^{s*}(N_{ij}))$. Therefore, $f(S) \in \cap_{t \in T}(\cup_{j=1}^{n_t}V^{s*}(K_{tj}))$, and hence for each $t \in T$, there exists $j_t$ ($1\leq j_t\leq n_t$) such that $f(S) \in V^{s*}(K_{tj_t})$. It follows that for each $t \in T$, $S\subseteq f^{-1}(K_{tj_t})$ so that $S \in V^{s*}(f^{-1}(K_{tj_t}))$. Consequently,
we have $S \in \cap_{i \in T}(\cup_{j=1}^{n_t}V^{s*}(f^{-1}(K_{tj})))$. Therefore,
$$
\cap_{i \in I}(\cup_{j=1}^{n_i} V^{s*}(f^{-1}(N_{ij})))\subseteq \cap_{i \in T}(\cup_{j=1}^{n_t}V^{s*}(f^{-1}(K_{tj}))).
$$
 By a similar argument we see that
$$
 \cap_{i \in T}(\cup_{j=1}^{n_t}V^{s*}(f^{-1}(K_{tj})))\subseteq \cap_{i \in I}(\cup_{j=1}^{n_i} V^{s*}(f^{-1}(N_{ij}))).
$$
Thus $(1)$ holds.
\end{proof}

\begin{prop}\label{p3.3}
Let $f:\acute{M}\rightarrow M$ be a monomorphism such that $\acute{M}$ is not secondless.
Define $\nu: X^s(\acute{M})\rightarrow X^s(M)$ by $\nu (S)=f(S) \in X^s(M)$ for each $S \in X^s(\acute{M})$. Then $\nu$ is a continuous map.
\end{prop}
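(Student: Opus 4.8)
The plan is to verify continuity of $\nu$ directly from the definition of the second classical Zariski topology, using the fact that $\eta^s(M)$ is generated by the sub-basis $\Omega^s(M)=\{W^s(N):N\leq M\}$; hence it suffices to show that $\nu^{-1}(W^s(N))$ is open in $X^s(\acute{M})$ for every submodule $N$ of $M$.

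First I would confirm that $\nu$ is genuinely well-defined, i.e. that $f(S)\in X^s(M)$ whenever $S\in X^s(\acute{M})$. Since $S\neq 0$ and $f$ is injective, $f(S)\neq 0$; and for each $a\in R$ we have $af(S)=f(aS)$, which equals $f(S)$ when $aS=S$ and equals $0$ when $aS=0$. As $S$ is second, one of these alternatives holds, so $f(S)$ is second. (This is essentially the only place the injectivity of $f$ is really needed, together with the observation that being second is preserved under the isomorphism $S\to f(S)$.) The hypothesis that $\acute{M}$ is not secondless only serves to make $X^s(\acute{M})$ nonempty, so that the statement is non-vacuous.

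Next, fix a submodule $N\leq M$ and compute $\nu^{-1}(W^s(N))$. For $S\in X^s(\acute{M})$ we have $\nu(S)=f(S)\in W^s(N)$ iff $f(S)\notin V^{s*}(N)$ iff $f(S)\not\subseteq N$. The key elementary point is the identity $f(S)\subseteq N\iff S\subseteq f^{-1}(N)$: the forward implication follows from $S\subseteq f^{-1}(f(S))$, the backward one from $f(f^{-1}(N))\subseteq N$; here $f^{-1}(N)$ is a submodule of $\acute{M}$. Consequently $f(S)\not\subseteq N$ iff $S\notin V^{s*}(f^{-1}(N))$ iff $S\in W^s(f^{-1}(N))$, so that
$$\nu^{-1}(W^s(N))=W^s(f^{-1}(N))\in\Omega^s(\acute{M}),$$
which is open in $X^s(\acute{M})$ (this is the pointwise shadow of the map $\varphi$ from Theorem \ref{t3.2}).

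Since the preimage under $\nu$ of every element of the defining sub-basis $\Omega^s(M)$ is open, $\nu$ is continuous. I do not expect any serious obstacle: the proof is a straightforward unwinding of definitions, and the only steps demanding a moment's care are the well-definedness of $\nu$ and the set-theoretic identity $f(S)\subseteq N\Leftrightarrow S\subseteq f^{-1}(N)$.
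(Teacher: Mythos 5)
Your proof is correct, and it takes a cleaner route than the paper's. The paper verifies continuity by taking an arbitrary closed set $V=\cap_{i\in I}(\cup_{j=1}^{n_i}V^{s*}(N_{ij}))$ and showing $\nu^{-1}(V)=\cap_{i\in I}(\cup_{j=1}^{n_i}V^{s*}(f^{-1}(N_{ij})))$, invoking the bijection $V^{s*}(N\cap \mathrm{Im}(f))\to V^{s*}(f^{-1}(N))$ of Lemma \ref{l3.1} along the way. You instead exploit the standard fact that continuity only needs to be checked on a sub-basis, and reduce everything to the single identity $\nu^{-1}(W^s(N))=W^s(f^{-1}(N))$, which follows from the adjunction $f(S)\subseteq N\Leftrightarrow S\subseteq f^{-1}(N)$ --- valid for any homomorphism, so that injectivity is used only to guarantee well-definedness ($f(S)\neq 0$ and $f(S)$ second). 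This buys you a shorter argument that dispenses with Lemma \ref{l3.1} entirely and isolates exactly where each hypothesis enters; what the paper's version buys in exchange is an explicit formula for the preimage of an arbitrary closed set, which is mildly more information but not needed for continuity. Both arguments are sound; yours is the one I would keep.
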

\begin{proof}
Clearly, $\nu$ is well-defined. Let $V=\cap_{i \in I}(\cup_{j=1}^{n_i}V^{*s}(N_{ij}))$ be a closed set in $X^s(M)$. We show that $\nu^{-1}(V)=\cap_{i \in I}(\cup_{j=1}^{n_i}V^{*s}(f^{-1}(N_{ij}))$. Let $S \in \nu^{-1}(V)$. Then $\nu(S) \in V$, so $f(S) \in \cap_{i \in I}(\cup_{j=1}^{n_i}V^{*s}(N_{ij}))$. Therefore, for each $i \in I$, there exists $j_i$ such that $f(S) \in V^{*s}(N_{ij_i})$.
But $\phi: V^{s*}(N_{ij_i}\cap Im(f))\rightarrow V^{s*}(f^{-1}(N_{ij_i}))$, given by $\acute{S}\rightarrow f^{-1}(\acute{S})$, is a bijective map by Lemma \ref{l3.1}. Hence we have $\phi (f(S))=f^{-1}f(S)=S+Ker(f)=S\in V^{s*}(f^{-1}(N_{ij_i}))$ by Lemma \ref{l3.1}.
It follows that $\nu^{-1}(V)\subseteq \cap_{i \in I}(\cup_{j=1}^{n_i}V^{s*}(f^{-1}(N_{ij}))$. The reverse inclusion is proved similarly and the proof is completed.
\end{proof}

\begin{rem}\label{r100}
Clearly, for an $R$-module $M$, $X^s(M)=X^s(soc(M))$. This fact shows that the study of Zariski topology on the second spectrum of $M$ can be easily reduced to that of socle modules.
\end{rem}

\begin{lem}\label{l3.6}
Let $M$ be an $R$-module and $S \in X^s(M)$. Let $V^{s*}(S)$ be endowed with the induced topology of $X^s(M)$. Then $V^{s*}(S)=X^s(S)$, where $S$ is a second submodule of $M$.
\end{lem}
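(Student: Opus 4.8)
The plan is to prove that the identity map on the underlying set is a homeomorphism between $V^{s*}(S)$, equipped with the subspace topology inherited from $X^s(M)$, and $X^s(S)$, equipped with its own second classical Zariski topology, where $S$ is regarded as an $R$-module in its own right.

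First I would settle the equality of underlying sets. Every submodule $T$ of $S$ is also a submodule of $M$, and the property ``$T$ is a second submodule'' only refers to the homomorphisms $T\stackrel{a}{\rightarrow}T$ for $a\in R$, so it does not depend on the ambient module. Consequently $X^s(S)=\{\,T\leq M:\ T\ \text{is second and}\ T\subseteq S\,\}=V^{s*}(S)$. In particular, for any submodule $L$ of $S$ the set $V^{s*}(L)$, computed inside $X^s(M)$, is exactly the set of second submodules of $S$ contained in $L$; that is, $V^{s*}(L)$ simultaneously denotes a basic closed generator for the subspace $V^{s*}(S)$ of $X^s(M)$ and a basic closed generator for $X^s(S)$.

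Next I would compare the two topologies through their closed sets. By construction the closed subsets of $X^s(M)$ are precisely the sets $\bigcap_{i\in I}\bigl(\bigcup_{j=1}^{n_i}V^{s*}(N_{ij})\bigr)$ with $N_{ij}\leq M$, and likewise the closed subsets of $X^s(S)$ are the sets $\bigcap_{i\in I}\bigl(\bigcup_{j=1}^{n_i}V^{s*}(L_{ij})\bigr)$ with $L_{ij}\leq S$. Using the identity $V^{s*}(A)\cap V^{s*}(B)=V^{s*}(A\cap B)$ recorded in the introduction together with the distributivity of intersection over finite unions and over arbitrary intersections, the intersection with $V^{s*}(S)$ of a closed set of $X^s(M)$ is
$$V^{s*}(S)\cap\bigcap_{i\in I}\Bigl(\bigcup_{j=1}^{n_i}V^{s*}(N_{ij})\Bigr)=\bigcap_{i\in I}\Bigl(\bigcup_{j=1}^{n_i}V^{s*}(S\cap N_{ij})\Bigr),$$
where an empty intersection is read as $V^{s*}(S)=X^s(S)$. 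Since each $S\cap N_{ij}$ is a submodule of $S$, the right-hand side is a closed set of $X^s(S)$ by the previous paragraph, so every closed set of the subspace topology is closed in $X^s(S)$. For the reverse inclusion, given a closed set $\bigcap_{i\in I}\bigl(\bigcup_{j=1}^{n_i}V^{s*}(L_{ij})\bigr)$ of $X^s(S)$ with each $L_{ij}\leq S$, one has $V^{s*}(L_{ij})\subseteq V^{s*}(S)$, so this set coincides with its intersection with $V^{s*}(S)$ and is therefore closed in the subspace topology. Hence the two families of closed sets agree, which yields $V^{s*}(S)=X^s(S)$ as topological spaces.

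I do not anticipate a genuine difficulty: the proof is essentially a bookkeeping exercise, and the only points that need care are the elementary distributive manipulations of the unions and intersections defining the (sub-)basic closed sets, the degenerate case $I=\emptyset$, and the small but essential observation that being second is an intrinsic property of a submodule, independent of the module in which it is considered.
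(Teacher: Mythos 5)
Your proof is correct and is exactly the routine verification the paper leaves out with the word ``Straightforward'': the underlying sets agree because being second is intrinsic to the submodule, and the topologies agree via $V^{s*}(A)\cap V^{s*}(B)=V^{s*}(A\cap B)$ and distributivity, with the empty-index case handled. Nothing further is needed.
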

\begin{proof}
Straightforward
\end{proof}

\begin{prop}\label{p3.7}
Let $M$ be an $R$-module, $I_M:=Ann_R(Soc_R(M))$ and $\overline{R}=R/I_M$. Then
the natural map $\psi_M:X^s(M)\rightarrow Spec(R/I_M)$ given by $S\rightarrow \overline{Ann_R(S)}=Ann_R(S)/I_M$ is a continuous map. Moreover, $\psi_M=\psi_{soc(M)}$.
\end{prop}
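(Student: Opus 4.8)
The plan is to verify three things in turn: that $\psi_M$ is well-defined (i.e.\ each $\overline{Ann_R(S)}$ is genuinely a prime ideal of $\overline{R}$), that it is continuous, and that it coincides with $\psi_{soc(M)}$. For well-definedness, let $S\in X^s(M)$. It is a standard fact about second submodules (see \cite{Y01}) that $Ann_R(S)$ is a prime ideal of $R$; moreover $S\subseteq soc(M)$, so $I_M=Ann_R(soc(M))\subseteq Ann_R(S)$, and hence $\overline{Ann_R(S)}=Ann_R(S)/I_M$ is a well-defined prime ideal of $\overline{R}=R/I_M$. Thus $\psi_M$ lands in $Spec(\overline R)$.

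The main work is continuity. A basic closed set of $Spec(\overline R)$ is $V(\overline{\mathfrak a})=\{\overline{\mathfrak p}\in Spec(\overline R): \overline{\mathfrak a}\subseteq\overline{\mathfrak p}\}$ for an ideal $\mathfrak a$ of $R$ containing $I_M$; equivalently one may take $\mathfrak a$ to be any ideal of $R$. I would show
$$
\psi_M^{-1}\bigl(V(\overline{\mathfrak a})\bigr)=V^{s*}\bigl((0:_M\mathfrak a)\bigr),
$$
which is a closed set in the second classical Zariski topology on $X^s(M)$ (indeed it is even in $\zeta^{s*}(M)$). For the inclusion $\subseteq$: if $\psi_M(S)=\overline{Ann_R(S)}\supseteq\overline{\mathfrak a}$, then $\mathfrak a\subseteq Ann_R(S)$, so $\mathfrak a S=0$, i.e.\ $S\subseteq(0:_M\mathfrak a)$, giving $S\in V^{s*}((0:_M\mathfrak a))$. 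Conversely, if $S$ is a second submodule with $S\subseteq(0:_M\mathfrak a)$, then $\mathfrak a S=0$, so $\mathfrak a\subseteq Ann_R(S)$ and hence $\overline{\mathfrak a}\subseteq\overline{Ann_R(S)}=\psi_M(S)$, i.e.\ $\psi_M(S)\in V(\overline{\mathfrak a})$. Since preimages of basic closed sets are closed, $\psi_M$ is continuous. (If one prefers to work with the sub-basic open sets $W^s(N)$ directly, the same computation shows $\psi_M^{-1}$ of the complement of $V(\overline{\mathfrak a})$ equals $W^s((0:_M\mathfrak a))$, which is open in $\eta^s(M)$.)

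Finally, $\psi_M=\psi_{soc(M)}$ is essentially a restatement of Remark \ref{r100}: the underlying sets satisfy $X^s(M)=X^s(soc(M))$ since every second submodule of $M$ is contained in $soc(M)$ and is, as a submodule, a second submodule of $soc(M)$ (and conversely). Moreover $Soc_R(soc(M))=soc(M)$ is a socle submodule, so $I_{soc(M)}=Ann_R(soc(M))=I_M$, whence the common codomain $Spec(R/I_M)$ is literally the same; and for $S\in X^s(M)=X^s(soc(M))$ the formula $S\mapsto Ann_R(S)/I_M$ is identical for both maps. I expect the only mild subtlety is keeping the topology bookkeeping straight — confirming that $V^{s*}((0:_M\mathfrak a))$ and $W^s((0:_M\mathfrak a))$ are, respectively, closed and open in the second classical Zariski topology $\eta^s(M)$ as defined in the introduction — but this is immediate from the definitions of $V^{s*}$, $W^s$, and $\eta^s(M)$, so no genuine obstacle arises.
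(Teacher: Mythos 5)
Your proposal is correct and follows essentially the same route as the paper: both reduce continuity to the identity $\psi_M^{-1}(V(\overline{I}))=V^{s*}((0:_MI))$, verified by the same two-line computation, and both dispatch $\psi_M=\psi_{soc(M)}$ via the observation $X^s(M)=X^s(soc(M))$ of Remark \ref{r100}. Your explicit check of well-definedness ($Ann_R(S)$ prime and $I_M\subseteq Ann_R(S)$) is a small addition the paper leaves implicit, but it does not change the argument.
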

\begin{proof}
Let $A$ be a closed subset of $Spec(\overline{R})$. Then $A=V(\overline{I})$ for some ideal $\overline{I}$ of $\overline{R}$. We claim that $\psi_M^{-1}(V(\overline{I}))=V^{s*}((0:_MI))$. So let $S \in \psi_M^{-1}(V(\overline{I}))$. Then $\psi(S) \in V(\overline{I})$. Hence $\overline{I}\subseteq \overline{Ann_R(S))}$ and so $I \subseteq Ann_R(S)$. Thus $S \subseteq (0:_MI)$ so that $S \in V^{s*}(0:_MI)$.
To see the reverse inclusion, let $S \in V^{s*}((0:_MI))$. Then $S \subseteq (0:_MI)$. Hence $I \subseteq Ann_R(S)$. Therefore, $\overline{I}\subseteq \overline{Ann_R(S)}$. Hence $\overline{Ann_R(S)} \in V(\overline{I)}$. This implies that $\psi(S) \in V(\overline{I})$ and so $S \in \psi^{-1}(V(\overline{I}))$. Therefore, $\psi_M^{-1}(V(\overline{I}))=V^{s*}((0:_MI))$ and hence $\psi_M$ is continuous. The second assertion follows from the Remark \ref{r100} .
\end{proof}

\section{Modules whose second classical Zariski topologies are spectral spaces}
Let $Z$ be a subset of a topological space $W$. Then the notation $cl(Z)$ will
denote the closure of $Z$ in $W$.

A topological space $X$ is called irreducible if $X \not =\emptyset$ and every finite intersection of non-empty open subset of $X$ is non-empty. A non-empty subset $Y$ of a topology $X$ is called \emph{irreducible set} if the subspace $Y$ of $X$ is irreducible, equivalently if $Y_1$ and $Y_2$ are closed subset of $X$ and satisfy $Y\subseteq Y_1 \cup Y_2$, then $Y\subseteq Y_1$ or $Y\subseteq Y_2$.

Let $Y$ be a closed subset of a topological space. An element $y \in Y$ is called a \emph{generic point} of $Y$ if $Y=cl(\{y \})$. A generic point of an
irreducible closed subset of a topological space is unique if the
topological space is a $T_{0}$ space.

A spectral space is a topological space homeomorphic to the prime spectrum of a commutative ring equipped
with the Zariski topology. Spectral spaces have been characterized by Hochster \cite [p.52, Prop.4] {Ho69} as the topological spaces $W$ which satisfy the following conditions:
\begin{itemize}
  \item [(a)] $W$ is a $T_0$-space;
  \item [(b)] $W$ is quasi-compact;
  \item [(c)] the quasi-compact open subsets of $W$ are closed under finite intersection and form an open base;
  \item [(d)] each irreducible closed subset of $W$ has a generic point.
\end{itemize}

Let $M$ be an $R$-module and $N$ be a submodule of $M$.
In \cite{BH08}, among other nice results, Proposition 3.1 states
that if $Y$ ia a nonempty subset of $Spec_R(M)$, then $cl(Y)= \cup_{P \in Y} V(P)$, where
$V(N)= \{P \in Spec_R(M)\mid P \supseteq N \}$.  Unfortunately, this result is not true in general because if we take $M=\Bbb Z$, where $\Bbb Z$ is the ring of integers, and $Y= Max(\Bbb Z)$, then we have $cl(Y)= Max(\Bbb Z)$, while
$cl(Y)= V(\cap_{P\in Y} P)= V(0)= \Bbb Z$ by \cite[5.1]{lu99}. However, this theorem is true when $Y$ is a finite set which
has been used by the authors during their results in \cite{BH08} and \cite{BHa0}.

Let $M$ be an $R$-module and $Y$ a subset of $X^s(M)$. We will denote $ \sum_{S \in Y}S$ by $T(Y)$.

\begin{prop}\label{p2.200}
Let $M$ be an $R$-module.
\begin{itemize}
\item [(a)] If $Y$ is a finite subset of $X^s(M)$, then
$cl(Y )= \cup_{S \in Y} V^{s*}(S)$.
\item [(b)]
if $Y$ is a closed subset of $X^s(M)$, we have  $Y= \cup_{S \in Y} V^{s*}(S)$.
\item [(c)]
If $M$ is a cotop $R$-module and if $Y$ is a subset of $X^s(M)$,
then $cl(Y )=V^{s*}(T(Y))= V^{s*}(\sum_{S \in Y} S).$

\end{itemize}
\end{prop}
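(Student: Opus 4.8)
The plan is to establish the three parts in order, using the basic identity $\cap_{i}V^{s*}(N_i)=V^{s*}(\cap_i N_i)$ and the description of closed sets in $X^s(M)$ as arbitrary intersections of finite unions of sets $V^{s*}(N)$. First I would prove (a). Since $Y=\{S_1,\dots,S_k\}$ is finite, the set $\cup_{S\in Y}V^{s*}(S)=\cup_{i=1}^k V^{s*}(S_i)$ is a finite union of closed sets of the basic form, hence closed; and it contains $Y$ because each $S_i\in V^{s*}(S_i)$. So $cl(Y)\subseteq\cup_{S\in Y}V^{s*}(S)$. For the reverse inclusion, let $C$ be any closed set containing $Y$; write $C=\cap_{t\in T}(\cup_{j=1}^{n_t}V^{s*}(N_{tj}))$. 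Fix $i$ and $t$: since $S_i\in C$, there is some $j$ with $S_i\in V^{s*}(N_{tj})$, i.e. $S_i\subseteq N_{tj}$; then every $S'\in V^{s*}(S_i)$ satisfies $S'\subseteq S_i\subseteq N_{tj}$, so $S'\in\cup_{j=1}^{n_t}V^{s*}(N_{tj})$. As this holds for all $t$, we get $V^{s*}(S_i)\subseteq C$, hence $\cup_{i}V^{s*}(S_i)\subseteq C$. Taking the intersection over all closed $C\supseteq Y$ gives $\cup_{S\in Y}V^{s*}(S)\subseteq cl(Y)$.

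For (b), if $Y$ is closed then $cl(Y)=Y$, and the only point I need beyond (a) is that the computation $cl(\{S\})=V^{s*}(S)$ for a single point $S$ — which is the $k=1$ case of (a) — extends to arbitrary $Y$. Concretely, $Y=cl(Y)\supseteq cl(\{S\})=V^{s*}(S)$ for every $S\in Y$, so $\cup_{S\in Y}V^{s*}(S)\subseteq Y$; conversely $Y\subseteq\cup_{S\in Y}V^{s*}(S)$ since $S\in V^{s*}(S)$. Hence equality. (Note this does not require $Y$ finite, since the inclusion $\supseteq$ only uses the one-point case of (a), applied pointwise.)

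For (c), assume $M$ is a cotop module, so $\zeta^{s*}(M)$ is closed under finite unions and therefore equals the collection of closed sets of $\eta^s(M)$; in particular every closed set has the form $V^{s*}(N)$. Let $Y\subseteq X^s(M)$ and put $N=T(Y)=\sum_{S\in Y}S$. Then $V^{s*}(N)$ is closed and contains $Y$ (each $S\in Y$ satisfies $S\subseteq N$), so $cl(Y)\subseteq V^{s*}(N)$. For the reverse inclusion, take any closed set $V^{s*}(K)\supseteq Y$; then $S\subseteq K$ for every $S\in Y$, so $N=\sum_{S\in Y}S\subseteq K$, whence $V^{s*}(N)\subseteq V^{s*}(K)$. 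Intersecting over all such $K$ yields $V^{s*}(N)\subseteq cl(Y)$, so $cl(Y)=V^{s*}(T(Y))$.

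The only genuinely delicate point is making sure the argument for (a) really does range over \emph{all} closed sets containing $Y$ and exploits that a finite union of basic closed sets is again closed; once that is in place the containment $V^{s*}(S_i)\subseteq C$ for each basic ``layer'' is a routine unravelling of the definition of $V^{s*}$. The step in (c) that could be overlooked is the need for the cotop hypothesis precisely to guarantee closed sets are of the single form $V^{s*}(K)$ rather than intersections of finite unions — without it the sum $T(Y)$ need not capture $cl(Y)$, exactly as the $\Bbb Z$ counterexample discussed before the proposition illustrates for the prime-spectrum analogue.
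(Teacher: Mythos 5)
Your proof is correct and follows essentially the same route as the paper's: for (a) you show the finite union $\cup_{S\in Y}V^{s*}(S)$ is closed, contains $Y$, and is contained in every closed set containing $Y$; for (b) you reduce to the one-point case of (a); and for (c) you use that the cotop hypothesis makes every closed set of the form $V^{s*}(K)$ and compare with $V^{s*}(T(Y))$. If anything, you are slightly more explicit than the paper in (a) about why the finite union is itself closed, which is the step that actually uses finiteness.
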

\begin{proof}
(a) Clearly, $Y \subseteq \cup_{S \in Y} V^{s*}(S)$.  Let $K$ be a closed subset of $X^s(M)$ containing $Y$.
 Thus $K= \cap_{i \in I}(\cup_{j=1}^{n_i}V^{s*}(N_{ij}))$, for some $N_{ij} \subseteq M$,
 $i \in I$, and $n_i \in \Bbb N$. Let $S_1 \in \cup_{S \in Y} V^{s*}(S)$. Then there exists $S \in Y$ such that $S_1 \in V^{s*}(S)$ so that $S_1 \subseteq S$. But $S \in Y$ implies that for each $i \in I$, there exists $j$, $1 \leq j \leq n_i$, such that $S \subseteq N_{ij}$. Thus we have $S_1\subseteq S \subseteq  N_{ij}$. Therefore $S_1 \in K$ so that
$\cup_{S \in Y} V^{s*}(S) \subseteq K$. In other words, $\cup_{S \in Y} V^{s*}(S)$ is the smallest closed subset of
$X^s(M)$ containing $Y$, i.e., $cl(Y)= \cup_{S \in Y} V^{s*}(S)$.

(b) It is enough to show  $\cup_{S \in Y}V^{s*}(S) \subseteq Y$ because the other side is clear.
To see this, we note that for every element $S$ of $Y$,
 $V^{s*}(S)= cl(\{S \}) \subseteq cl(Y)=Y$ by part (a). Hence $\cup_{S \in Y}V^{s*}(S) \subseteq Y$, as required.

(c) First we note that since $M$ ia a cotop module, each closed set is of the form of $ V^{s*}(N)$ for some $N \leq M$. Clearly $Y \subseteq V^{s*}(T(Y))$. Now let $ V^{s*}(N)$ be any closed subset of $X^s(M)$ containing $Y$.
Then for each $S \in Y$, we have $ S \subseteq N$ so that $T(Y) \subseteq N$. So if $S \in V^{s*}(T(Y))$,
then $S \subseteq T(Y) \subseteq N$. Hence $S \in V^{s*}(N)$, i.e., $V^{s*}(T(Y)) \subseteq V^{s*}(N)$.
This shows that $cl(Y )=V^{s*}(T(Y))$.  This completes the proof.
\end{proof}

\begin{cor}\label{c2.24}
Let $M$ be an $R$-module. Then we have the following.
\begin{itemize}
  \item [(a)] $cl(\{S \})=V^{s*}(S)$, for all $S \in Spec^s(M)$.
  \item [(b)] $S_1 \in cl(\{S \}) \Leftrightarrow S_1 \subseteq S \Leftrightarrow V^{s*}(S_1)\subseteq V^{s*}(S)$.
  \item [(c)] The set $\{S \}$ is closed in $X^s(M)$ if and only if $S$ is a minimal second submodule of $M$.
\end{itemize}
\begin{proof}
Use Proposition \ref{p2.200}(a).
\end{proof}
\end{cor}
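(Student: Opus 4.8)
The plan is to derive all three parts directly from Proposition \ref{p2.200}(a), specialized to singleton sets, so there is very little work beyond unwinding definitions. First I would establish (a): since a one-point set is finite, applying Proposition \ref{p2.200}(a) to $Y=\{S\}$ gives $cl(\{S\})=\bigcup_{S'\in\{S\}}V^{s*}(S')=V^{s*}(S)$, which is exactly the claim.

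Next I would handle (b). The first equivalence is immediate from (a) together with the definition of $V^{s*}$: an element $S_1$ of $X^s(M)$ lies in $cl(\{S\})=V^{s*}(S)$ precisely when $S_1$ is a second submodule of $M$ contained in $S$, i.e.\ when $S_1\subseteq S$ (note that membership in $cl(\{S\})\subseteq X^s(M)$ already presupposes that $S_1$ is second, so no extra hypothesis is needed). For the second equivalence, the implication $S_1\subseteq S\Rightarrow V^{s*}(S_1)\subseteq V^{s*}(S)$ is trivial, since any second submodule contained in $S_1$ is a fortiori contained in $S$. Conversely, assuming $V^{s*}(S_1)\subseteq V^{s*}(S)$, I would use that $S_1\in V^{s*}(S_1)$ (because $S_1$ is a second submodule and $S_1\subseteq S_1$); hence $S_1\in V^{s*}(S)$, which forces $S_1\subseteq S$.

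Finally, for (c): the singleton $\{S\}$ is closed in $X^s(M)$ if and only if $cl(\{S\})=\{S\}$, and by part (a) this holds if and only if $V^{s*}(S)=\{S\}$. The latter says that the only second submodule of $M$ contained in $S$ is $S$ itself, which is exactly the assertion that $S$ is a minimal second submodule of $M$.

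There is essentially no obstacle in this argument; the one point requiring a little care is keeping the quantifiers straight in (b) and recalling that belonging to $V^{s*}(S)$ already incorporates the condition of being a second submodule. All the topological content is carried by Proposition \ref{p2.200}(a), and everything else is a direct translation between containment of submodules and containment of the sets $V^{s*}(\,\cdot\,)$.
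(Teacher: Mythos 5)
Your argument is correct and is exactly the paper's intended route: the paper's entire proof is ``Use Proposition \ref{p2.200}(a),'' and you have simply spelled out the specialization to $Y=\{S\}$ and the routine translations for parts (b) and (c). Nothing further is needed.
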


\begin{lem}\label{c22.24}
Let $M$ be an $R$-module. Then for each $S \in X^s(M)$, $V^{s*}(S)$ is irreducible. In particular, $X^s(M)$ is irreducible.
\end{lem}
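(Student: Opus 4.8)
The plan is to show that $V^{s*}(S)$ is irreducible for each $S \in X^s(M)$ by producing a generic point for it; the claim about $X^s(M)$ then follows by taking $S = soc(M)$, since $X^s(M) = X^s(soc(M)) = V^{s*}(soc(M))$ by Remark \ref{r100} and Lemma \ref{l3.6}. The key observation is that $S$ itself is a second submodule of $M$, hence $S \in V^{s*}(S)$, and by Corollary \ref{c2.24}(a) we have $cl(\{S\}) = V^{s*}(S)$. So $S$ is a generic point of $V^{s*}(S)$, which already forces irreducibility: a closed set that is the closure of a single point is always irreducible. Concretely, if $V^{s*}(S) \subseteq Y_1 \cup Y_2$ with $Y_1, Y_2$ closed in $X^s(M)$, then $S \in Y_1$ or $S \in Y_2$, say $S \in Y_1$; then $V^{s*}(S) = cl(\{S\}) \subseteq cl(Y_1) = Y_1$, so $V^{s*}(S)$ is contained in one of them.

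First I would verify that $V^{s*}(S)$ is nonempty — this is immediate since $S \in V^{s*}(S)$ because $S$ is second and $S \subseteq S$. Next I would invoke Corollary \ref{c2.24}(a) to identify $cl(\{S\})$ with $V^{s*}(S)$. Then I would run the one-line argument above using the definition of an irreducible subset in terms of closed sets (the equivalent formulation recorded in the paper just before Lemma \ref{c22.24}): any subset that equals the closure of one of its points is irreducible. Finally, for the "in particular" statement, I would note $X^s(M) = V^{s*}(soc(M))$ (using that $soc(M)$ is the sum of all second submodules and hence contains every second submodule, so every second submodule lies in $V^{s*}(soc(M))$, while the reverse inclusion is trivial; alternatively cite Remark \ref{r100} together with Lemma \ref{l3.6}), and apply the first part — noting that if $soc(M) = 0$ then $X^s(M) = \emptyset$, in which case the statement about $X^s(M)$ being irreducible is either vacuous or excluded, so one implicitly assumes $X^s(M) \neq \emptyset$.

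There is essentially no obstacle here: the entire content is packaged in Corollary \ref{c2.24}(a), and the only mild subtlety is the edge case where $soc(M)$ may fail to be a second submodule of $M$ itself (the sum of second submodules need not be second), so $V^{s*}(soc(M))$ is not literally an instance of "$V^{s*}(S)$ with $S$ second." To handle the "in particular" cleanly I would instead argue directly: $X^s(M)$ is nonempty by hypothesis, pick any $S_0 \in X^s(M)$ contained in a maximal second submodule, or more simply observe that $X^s(M) \subseteq V^{s*}(soc(M))$ with equality, and that $V^{s*}(soc(M))$ is irreducible because any closed cover $Y_1 \cup Y_2$ of it must contain each $V^{s*}(S)$, each of which is irreducible and contains $S$ — but since the $V^{s*}(S)$ need not be nested, the slick route is just to note that $X^s(M)$ has a generic point whenever it is the closure of a point. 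The safest fix is to restrict the "in particular" to the case $X^s(M) \neq \emptyset$ and, if $soc(M)$ happens to be second, apply the main part verbatim; otherwise observe that $soc(M) = T(X^s(M))$ and every element of $X^s(M)$ lies in $V^{s*}(soc(M))$, so the same two-line separation argument applies with $soc(M)$ in place of $S$.
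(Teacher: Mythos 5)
Your proof of the main claim is correct and is essentially the intended argument (the paper records the proof only as ``straightforward''): since $S\in V^{s*}(S)$ and $cl(\{S\})=V^{s*}(S)$ by Corollary \ref{c2.24}(a), the set $V^{s*}(S)$ is the closure of one of its points, and your two-line separation argument ($V^{s*}(S)\subseteq Y_1\cup Y_2$ forces $S\in Y_i$ for some $i$, hence $V^{s*}(S)=cl(\{S\})\subseteq Y_i$) is exactly the right way to make that explicit. There is no circularity, since Corollary \ref{c2.24} rests on Proposition \ref{p2.200}(a), which is proved independently.

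The ``in particular'' statement is where the trouble lies, and you correctly identified the obstruction---$soc(M)$ need not itself be a second submodule---but none of your proposed patches closes the gap. Your final suggestion, that ``the same two-line separation argument applies with $soc(M)$ in place of $S$,'' fails precisely because that argument requires the generic point to be an \emph{element} of the space: from $X^s(M)=V^{s*}(soc(M))\subseteq Y_1\cup Y_2$ one cannot conclude $soc(M)\in Y_1$ or $soc(M)\in Y_2$ unless $soc(M)$ is second. In fact the unrestricted claim is false: for $M=\Bbb Z_6$ over $\Bbb Z$ (the module of the paper's own Example \ref{e2.18}), $X^s(M)=\{2\Bbb Z_6,\,3\Bbb Z_6\}$ is the union of the two proper closed subsets $V^{s*}(2\Bbb Z_6)$ and $V^{s*}(3\Bbb Z_6)$, so it is not irreducible. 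This is consistent with Corollary \ref{c2.27}, which states that $X^s(M)$ is irreducible if and only if $soc(M)$ is a second module; the ``in particular'' therefore holds only under that extra hypothesis (for instance when $M$ is itself second, so that $X^s(M)=V^{s*}(M)$ is literally an instance of the first part). No argument can rescue the unqualified version, so the honest resolution is to prove the first assertion as you did and to restrict or drop the second. Note that only the first assertion is actually used later (in the proof of Theorem \ref{t2.25}(b)), so nothing downstream is affected.
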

\begin{proof}
The proof is straightforward.
\end{proof}

We need the following evident Lemma.

\begin{lem}\label{l2.25}
Let $S$ be a submodule of an $R$-module $M$. Then the following are equivalent.
\begin{itemize}
  \item [(a)] $S$ is a second submodule of $M$.
  \item [(b)] For each $r \in R$ and submodule $K$ of $M$, $rS\subseteq K$ implies that $rS=0$ or $S\subseteq K$.
\end{itemize}
\end{lem}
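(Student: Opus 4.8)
The plan is to prove the two implications separately; only the direction $(b)\Rightarrow(a)$ requires an idea beyond unwinding definitions.

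For $(a)\Rightarrow(b)$, I would assume $S$ is second, fix $r\in R$ and a submodule $K\le M$ with $rS\subseteq K$. By the definition of a second submodule the multiplication map $S\stackrel{r}{\rightarrow}S$ is either zero or surjective, that is, $rS=0$ or $rS=S$. In the first case we are in the first alternative of $(b)$; in the second case $S=rS\subseteq K$, which is the second alternative of $(b)$.

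For $(b)\Rightarrow(a)$, the key observation is that the hypothesis $(b)$ may be applied to the particular submodule $K:=rS$. Since $rS\subseteq rS$ trivially, $(b)$ forces $rS=0$ or $S\subseteq rS$; and as the inclusion $rS\subseteq S$ always holds, $S\subseteq rS$ yields $rS=S$. Thus for every $r\in R$ we have $rS=0$ or $rS=S$, i.e. $S\stackrel{r}{\rightarrow}S$ is zero or surjective, which is exactly the condition defining a second submodule (recalling that ``second'' includes the requirement $S\ne 0$).

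I do not expect a genuine obstacle here: the substitution $K=rS$ does all the work in the nontrivial direction. The only point deserving a word of care is the degenerate case $S=0$, where $(b)$ holds vacuously while $0$ is not second by convention; so one should regard the standing nonzero hypothesis on $S$ (implicit in the word ``second'') as part of the statement, or add ``nonzero submodule'' to the hypothesis of the lemma.
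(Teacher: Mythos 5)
Your proof is correct and is exactly the straightforward unwinding the paper has in mind (the paper's proof is literally ``The proof is straightforward''), with the substitution $K=rS$ being the natural move in the direction $(b)\Rightarrow(a)$. Your remark about the degenerate case $S=0$, where $(b)$ holds vacuously but $S$ is not second, is a genuine (if minor) imprecision in the lemma as stated, and your fix of reading the nonzero hypothesis into the statement is the right one.
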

\begin{proof}
The proof is straightforward.
\end{proof}

\begin{thm}\label{t2.25}
Let $M$ be an $R$-module and $Y \subseteq X^s(M)$.
\begin{itemize}
  \item [(a)] If $Y$ is irreducible, then $T(Y)$ is a second submodule.
  \item [(b)] If $T(Y)$ is a second submodule and $T(Y) \in cl({Y})$, then $Y$ is irreducible.
\end{itemize}
\end{thm}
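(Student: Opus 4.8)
The plan is to treat the two parts separately: part (a) via the ``second vs.\ submodule'' criterion of Lemma \ref{l2.25} combined with the closed-set formulation of irreducibility, and part (b) via the computation of closures in Corollary \ref{c2.24}(a) and the irreducibility statement of Lemma \ref{c22.24}.

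For part (a), first note that an irreducible set is nonempty, so $T(Y)\neq 0$. To apply Lemma \ref{l2.25}, I would fix $r\in R$ and a submodule $K$ of $M$ with $rT(Y)\subseteq K$, and aim to show $rT(Y)=0$ or $T(Y)\subseteq K$. For each $S\in Y$ we have $rS\subseteq rT(Y)\subseteq K$, and since $S$ is a second submodule, Lemma \ref{l2.25} gives $rS=0$ or $S\subseteq K$; that is, $S\in V^{s*}\big((0:_M r)\big)$ or $S\in V^{s*}(K)$. Hence $Y\subseteq V^{s*}\big((0:_M r)\big)\cup V^{s*}(K)$, a union of two closed subsets of $X^s(M)$. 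By irreducibility of $Y$, either $Y\subseteq V^{s*}\big((0:_M r)\big)$, in which case $rS=0$ for all $S\in Y$ and so $rT(Y)=\sum_{S\in Y}rS=0$, or $Y\subseteq V^{s*}(K)$, in which case $S\subseteq K$ for all $S\in Y$ and so $T(Y)=\sum_{S\in Y}S\subseteq K$. Thus the criterion of Lemma \ref{l2.25} is met and $T(Y)$ is a second submodule.

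For part (b), the key step is to identify $cl(Y)$ with $V^{s*}(T(Y))$. Since $S\subseteq T(Y)$ for every $S\in Y$, we get $Y\subseteq V^{s*}(T(Y))$; as $V^{s*}(T(Y))$ is closed in $X^s(M)$, this yields $cl(Y)\subseteq V^{s*}(T(Y))$. Conversely, the hypothesis $T(Y)\in cl(Y)$ gives $V^{s*}(T(Y))=cl(\{T(Y)\})\subseteq cl(Y)$ by Corollary \ref{c2.24}(a). Hence $cl(Y)=V^{s*}(T(Y))$. Since $T(Y)$ is a second submodule, $V^{s*}(T(Y))$ is irreducible by Lemma \ref{c22.24}, so $cl(Y)$ is irreducible; and since a subset of a topological space is irreducible precisely when its closure is irreducible, $Y$ itself is irreducible.

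I do not anticipate a serious obstacle: once one observes that $\{S\in X^s(M):rS=0\}=V^{s*}\big((0:_M r)\big)$ and $\{S\in X^s(M):S\subseteq K\}=V^{s*}(K)$ are closed sets, the closed-set form of irreducibility applies mechanically in (a). The one point that genuinely requires care is the equality $cl(Y)=V^{s*}(T(Y))$ in (b): the inclusion $V^{s*}(T(Y))\subseteq cl(Y)$ can fail without the assumption $T(Y)\in cl(Y)$, so that hypothesis must be used at exactly this spot, together with the standard fact that irreducibility passes between a set and its closure.
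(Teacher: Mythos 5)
Your proposal is correct and follows essentially the same route as the paper: part (a) reduces to the criterion of Lemma \ref{l2.25} by covering $Y$ with the closed sets $V^{s*}\big((0:_M r)\big)$ and $V^{s*}(K)$, and part (b) identifies $cl(Y)$ with $V^{s*}(T(Y))$ and invokes the irreducibility of $V^{s*}(S)$ from Lemma \ref{c22.24}. The only (harmless) cosmetic differences are that the paper passes through the intermediate set $V^{s*}\big((K:_M r)\big)$ in (a), and in (b) verifies the closed-cover condition for $Y$ directly rather than citing that irreducibility passes between a set and its closure; you also usefully spell out the equality $cl(Y)=V^{s*}(T(Y))$, which the paper leaves as ``easy to see.''
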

\begin{proof}
 (a) Assume that $Y$ is an irreducible subset of $X^s(M)$. Then obviously $T(Y)$ is a nonzero submodule of $M$ and $Y\subseteq V^{s*}(T(Y))$. Now let $rT(Y)\subseteq K$, whence $r \in R$ and $K$ is a submodule of $M$. It is easy to see that $Y\subseteq V^{s*}((K:_Mr))\subseteq V^{s*}(K) \cup V^{s*}((0:_Mr))$. Since $Y$ is irreducible, we have $Y\subseteq V^{s*}((0:_Mr))$ or $Y\subseteq V^{s*}(K)$. If $Y\subseteq V^{s*}((0:_Mr))$, then $rS=0$ for all $S \in Y$. Thus $rT(Y)=0$. If $Y\subseteq V^{s*}(K)$, then $S\subseteq K$ for each $S \in Y$, so $T(Y)\subseteq K$. Hence by Lemma \ref{l2.25}, $T(Y)$ is a second submodule of $M$.

(b) Assume that $S:= T(Y)$ is a second submodule of $M$ and $S \in cl({Y})$. It is easy to see that $cl({Y})=V^{s*}(S)$. Now let $Y\subseteq Y_1\cup Y_2$, where $Y_1$, $Y_2$ are closed sets. Then we have $V^{s*}(S)=cl({Y})\subseteq Y_1 \cup Y_2$. Since $V^{s*}(S)$ is irreducible, $V^{s*}(S)\subseteq Y_1$ or $V^{s*}(S)\subseteq Y_2$. Hence $Y\subseteq Y_1$ or $Y\subseteq Y_2$. So $Y$ is irreducible.
\end{proof}

\begin{cor}\label{c2.27}
Let $M$ be an $R$-module and let $N$ be a submodule of $M$. Then $V^{s*}(N)$ of $Spec^s(M)$ is irreducible if and only if $soc(N)$ is a second submodule of $M$. Consequently, $Spec^s(M)$ is irreducible if and only if $soc(M)$ is a second module.
\end{cor}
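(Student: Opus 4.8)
The plan is to obtain both statements as immediate consequences of Theorem \ref{t2.25}, once we identify the submodule $T(V^{s*}(N))$ with the second socle $soc(N)$.

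First I would record the identification $V^{s*}(N)=X^s(N)$. A submodule $S\subseteq N$ is a second submodule of $M$ exactly when it is a second submodule of $N$, since the defining condition---for each $a\in R$ the homomorphism $S\stackrel{a}{\rightarrow}S$ is surjective or zero---refers only to $S$ and the $R$-action on it. Hence, putting $Y:=V^{s*}(N)$, the members of $Y$ are precisely the second submodules of $N$, and therefore $T(Y)=\sum_{S\in Y}S=soc(N)$ by the definition of the second socle (both sides being $0$ when $N$ has no second submodule).

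Now the equivalence splits into the two halves of Theorem \ref{t2.25}. If $Y=V^{s*}(N)$ is irreducible, then Theorem \ref{t2.25}(a) gives that $T(Y)=soc(N)$ is a second submodule of $M$. Conversely, if $soc(N)$ is a second submodule of $M$, then because $soc(N)\subseteq N$ we have $soc(N)\in V^{s*}(N)=Y$, so $T(Y)=soc(N)\in Y\subseteq cl(Y)$; since $T(Y)$ is second, Theorem \ref{t2.25}(b) applies and shows that $Y$ is irreducible. The point that makes the converse painless is that for the particular set $Y=V^{s*}(N)$ the element $T(Y)$ lies in $Y$ itself, so the hypothesis $T(Y)\in cl(Y)$ of Theorem \ref{t2.25}(b) is automatic.

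Finally, the ``consequently'' clause is the case $N=M$: there $V^{s*}(M)=Spec^s(M)$, and $soc(M)$ is a second submodule of $M$ if and only if it is a (nonzero) second module, so the first part yields that $Spec^s(M)$ is irreducible iff $soc(M)$ is a second module. I do not expect any real obstacle here; the only point requiring a little care is the identification $T(V^{s*}(N))=soc(N)$ and the degenerate case $V^{s*}(N)=\emptyset$ (equivalently $soc(N)=0$), where neither side of the equivalence holds.
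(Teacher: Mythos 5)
Your proof is correct and follows essentially the same route as the paper: identify $T(V^{s*}(N))$ with $soc(N)$ and apply Theorem \ref{t2.25} in both directions, checking $T(Y)\in cl(Y)$ for the converse (the paper invokes Proposition \ref{p2.200}(b) here, whereas you observe directly that $soc(N)\in V^{s*}(N)$; both are fine). Your explicit justification of $T(V^{s*}(N))=soc(N)$ and the remark on the degenerate case $V^{s*}(N)=\emptyset$ are welcome details the paper leaves implicit.
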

\begin{proof}
($\Rightarrow$). Let $Y: = V^{s*}(N)$ be an irreducible subset of $X^s(M)$. Then we have $T(Y)= soc(N)$ so
that $soc(N)$ is a second submodule of $M$ by Theorem \ref{t2.25}(a).

($\Leftarrow$).
Suppose $soc(N)$ is a second submodule of $M$. Then by Theorem \ref{p2.200}(b),
$Y: = V^{s*}(N)= \cup_{S \in Y} V^{s*}(S)$ so that $soc(N) \in cl(Y)$. Hence $V^{s*}(N)$ is irreducible by
Theorem \ref{t2.25}(b).
\end{proof}

We remark that any closed subset of a spectral space is spectral for the induced topology, and we note that a generic point of an irreducible closed subset $Y$ of a topological space is unique if the topological space is a $T_0$-space. The following proposition shows that for any $R$-module $M$, $X^s(M)$ is always a $T_0$-space.

\begin{lem}\label{l2.28}
Let $M$ be an $R$-module. Then the following hold.
\begin{itemize}
  \item [(a)] $X^s(M)$ is a $T_0$-space.
  \item [(b)] Let $S \in X^s(M)$. Then $S$ is a generic point of the irreducible closed subset
  $V^{s*}(S)$.
\end{itemize}
\end{lem}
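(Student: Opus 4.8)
The plan is to deduce both parts formally from the description of closures already obtained in Corollary~\ref{c2.24} and from the irreducibility of $V^{s*}(S)$ established in Lemma~\ref{c22.24}; no new module theory is needed. For part~(a), I would use the standard fact that a topological space is $T_0$ exactly when any two distinct points have distinct closures. So suppose $S_1, S_2 \in X^s(M)$ with $cl(\{S_1\}) = cl(\{S_2\})$. By Corollary~\ref{c2.24}(a) this says $V^{s*}(S_1) = V^{s*}(S_2)$. Since each $S_i$ is a second submodule, $S_i \subseteq S_i$ gives $S_i \in V^{s*}(S_i)$, so $S_1 \in V^{s*}(S_2)$ and $S_2 \in V^{s*}(S_1)$; by Corollary~\ref{c2.24}(b) (equivalently, straight from the definition of $V^{s*}$) this forces $S_1 \subseteq S_2$ and $S_2 \subseteq S_1$, hence $S_1 = S_2$. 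Thus $X^s(M)$ is $T_0$. Alternatively one can exhibit the separating open set directly: if, say, $S_2 \not\subseteq S_1$, then $W^s(S_1)$ is a sub-basic open set with $S_2 \in W^s(S_1)$ and $S_1 \notin W^s(S_1)$, and since $S_1 \neq S_2$ at least one of the two inclusions fails, so one of $W^s(S_1)$, $W^s(S_2)$ does the job.

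For part~(b), recall that a generic point of a closed set $Y$ is a point $y \in Y$ with $cl(\{y\}) = Y$. First note that $V^{s*}(S)$ is closed in $X^s(M)$, being the complement of the sub-basic open set $W^s(S) = Spec^s(M) - V^{s*}(S)$; and it is irreducible by Lemma~\ref{c22.24}. It contains $S$ because $S$ is a second submodule with $S \subseteq S$. Finally, Corollary~\ref{c2.24}(a) gives $cl(\{S\}) = V^{s*}(S)$, so $S$ is a generic point of $V^{s*}(S)$. Uniqueness of this generic point then follows from part~(a), since a $T_0$-space has at most one generic point per irreducible closed subset.

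I do not expect a genuine obstacle here: once Corollary~\ref{c2.24} and Lemma~\ref{c22.24} are available, both statements are essentially bookkeeping. The only places that require a little care are recalling the correct characterization ``$T_0 \iff$ distinct points are topologically distinguishable $\iff$ distinct points have distinct closures'' (this is where the argument uses more than the definition of the topology), and checking that $S$ genuinely belongs to $V^{s*}(S)$, i.e. that $V^{s*}$ includes a second submodule among the submodules contained in itself — which is immediate from the definition of $V^{s*}$.
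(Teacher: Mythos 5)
Your proof is correct and follows essentially the same route as the paper: both parts are deduced from Corollary \ref{c2.24} (which gives $cl(\{S\})=V^{s*}(S)$) together with the standard characterization of $T_0$-spaces via distinct closures, and the irreducibility of $V^{s*}(S)$ from Lemma \ref{c22.24}. You have merely written out explicitly the bookkeeping that the paper leaves to the reader.
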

\begin{proof}
(a) This follows from Corollary \ref{c2.24} and the fact that a topological space is a $T_0$-space if and only if the closures of distinct points are distinct.

(b) By Corollary \ref{c2.24}.

\end{proof}

By \cite[3.3]{AF21}, if $M$ is a comultiplication $R$-module $M$, then the second classical Zariski topology of $M$ and the Zariski topology of $M$ considered in \cite{AF21} coincide (note that every comultiplication module is a cotop module).

\begin{prop}\label{p2.29}
Let $R$ be a commutative Noetherian ring and let $M$ be a cotop $R$-module with finite length. Assume that
 the second classical Zariski topology of $M$ and the Zariski topology of $M$ considered in \cite{AF21} coincide. Then $M$ is a comultiplication $R$-module.
\end{prop}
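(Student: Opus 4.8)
The plan is to derive the conclusion from Theorem~\ref{t2.10}. Since $M$ is a finite length module over the Noetherian ring $R$, that theorem reduces the claim to showing that $X^s(M)$ is a $T_1$-space, which by Theorem~\ref{t2.6} is equivalent to $S.dim(M)\leq 0$, i.e.\ to the nonexistence of a strict inclusion $S_1\subsetneq S_2$ between second submodules of $M$ (if $X^s(M)=\emptyset$ there is nothing to prove). So I would assume, for a contradiction, that $S_1$ and $S_2$ are second submodules of $M$ with $S_1\subsetneq S_2$.

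The first step is a computation of annihilators. As $S_2$ is second, $P:=Ann_R(S_2)$ is a prime ideal; since $M$, hence $S_2$, has finite length over $R$, the nonzero faithful $R/P$-module $S_2$ has finite length over the domain $R/P$, and inspecting its composition factors forces $R/P$ to be a field, so $P$ is maximal (compare Theorem~\ref{t2.11}). Then $0\neq S_1\subseteq S_2$ gives $P=Ann_R(S_2)\subseteq Ann_R(S_1)\subsetneq R$, and maximality of $P$ yields $Ann_R(S_1)=P$; in particular $PS_2=0$.

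Next I would use that the two topologies coincide. The set $V^{s*}(S_1)$ is always closed in the second classical Zariski topology of $M$, hence, by hypothesis, closed in the Zariski topology of $M$ in the sense of \cite{AF21}. Since the closed subsets of the latter topology are precisely the sets $V^{s*}((0:_MI))$ with $I$ an ideal of $R$, there is an ideal $I$ of $R$ with $V^{s*}(S_1)=V^{s*}((0:_MI))$. From $S_1\in V^{s*}(S_1)=V^{s*}((0:_MI))$ we obtain $IS_1=0$, so $I\subseteq Ann_R(S_1)=P$, and therefore $IS_2\subseteq PS_2=0$. Thus $S_2\subseteq(0:_MI)$, i.e.\ $S_2\in V^{s*}((0:_MI))=V^{s*}(S_1)$, which forces $S_2\subseteq S_1$, contradicting $S_1\subsetneq S_2$. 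Hence $S.dim(M)\leq 0$, so $X^s(M)$ is a $T_1$-space, and $M$ is a comultiplication module by Theorem~\ref{t2.10}.

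I expect the step needing the most care to be the description, imported from \cite{AF21}, of the closed sets of the Zariski topology of $M$ as the family $\{\,V^{s*}((0:_MI)):I\ \text{an ideal of}\ R\,\}$ (equivalently, that the closure of $\{S\}$ in that topology equals $V^{s*}((0:_MAnn_R(S)))$); note that the proof of Proposition~\ref{p3.7} already records the companion identity $\psi_M^{-1}(V(\overline I))=V^{s*}((0:_MI))$. It is this description that lets us rewrite the ``classical'' closed set $V^{s*}(S_1)$ in annihilator form and then enlarge it to absorb $S_2$. The cotop hypothesis, by contrast, is used only to ensure that the Zariski topology of \cite{AF21} is available in this setting; the annihilator computation is routine (and close to Theorem~\ref{t2.11}), and the final implication is simply Theorem~\ref{t2.10}.
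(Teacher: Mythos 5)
Your proof is correct, but it follows a genuinely different route from the paper's. The paper's argument is a two-step reduction: Lemma \ref{l2.28} says the second classical Zariski topology is always a $T_0$-topology, so the coinciding Zariski topology of \cite{AF21} is $T_0$, and then the external result \cite[7.6]{AF21} (a $T_0$ criterion for comultiplication among finite-length modules over a Noetherian ring) gives the conclusion immediately. You instead extract the stronger $T_1$ property from the coincidence hypothesis and feed it into Theorem \ref{t2.10}: from a hypothetical strict chain $S_1\subsetneq S_2$ of second submodules you deduce, using finite length, that $Ann_R(S_1)=Ann_R(S_2)$ is a maximal ideal, and then the fact that the closed sets of the \cite{AF21} topology are annihilator-determined, i.e.\ of the form $V^{s*}((0:_MI))$, forces $S_2\in V^{s*}(S_1)$, a contradiction. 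Your description of those closed sets is the right one for the topology of \cite{AF21}, whose closed sets are $V^{s}(N)=\{S:Ann_R(N)\subseteq Ann_R(S)\}$; one checks $V^{s}(N)=V^{s*}((0:_MAnn_R(N)))$ and $V^{s*}((0:_MI))=V^{s}((0:_MI))$, so your ``import'' is legitimate, and the annihilator computation itself is sound. What your route buys is a self-contained explanation of \emph{why} the coincidence forces comultiplication --- the two topologies can only agree if second submodules are separated by their annihilators --- and it actually establishes the stronger conclusion $S.dim(M)\leq 0$; what it costs is dependence on Theorem \ref{t2.10} (hence on \cite[3.6]{AF1}) and on correctly identifying the topology of \cite{AF21}, whereas the paper simply outsources the whole implication to \cite[7.6]{AF21}. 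As you note, the cotop hypothesis is idle in your argument; in the paper it is what makes the comparison with, and the quoted theorem of, \cite{AF21} applicable.
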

\begin{proof}
Assume that the second classical Zariski topology of $M$ and the Zariski topology of $M$ considered in \cite{AF21}, coincide. Then by Lemma \ref{l2.28}, $Spec^s(M)$ with the topology considered in \cite{AF21} is a $T_0$-space. Now by \cite[7.6]{AF21}, $M$ is a comultiplication $R$-module.
\end{proof}

\begin{prop}\label{p3.30}
Let $R$ be a commutative Noetherian ring and let $M$ be a comultiplication $R$-module with finite length. Then $X^s(M)$ is a spectral space (with the second classical Zariski topology).
\end{prop}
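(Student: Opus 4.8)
The plan is to verify the four conditions in Hochster's characterisation recalled above for $X^s(M)$ with the second classical Zariski topology, the point being that for a comultiplication module of finite length the set $X^s(M)$ turns out to be \emph{finite}. I would begin by normalising the topology: since every comultiplication module is a cotop module, $\zeta^{s*}(M)$ is closed under finite unions, so the closed subsets of $X^s(M)$ are precisely the sets $V^{s*}(N)$ with $N\le M$, and $\Omega^s(M)=\{W^s(N):N\le M\}$ is an open base for the topology which is closed under finite intersection.

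The crucial step is to show $X^s(M)$ is finite. For each $S\in X^s(M)$ the ideal $Ann_R(S)$ is prime. Since $M$ has finite length, a composition series of $M$ gives finitely many distinct maximal ideals $\mathfrak m_1,\dots,\mathfrak m_r$ and positive integers $a_1,\dots,a_r$ with $\mathfrak m_1^{a_1}\cdots\mathfrak m_r^{a_r}\subseteq Ann_R(M)\subseteq Ann_R(S)$; as $Ann_R(S)$ is prime this forces $Ann_R(S)=\mathfrak m_i$ for some $i$, so only finitely many primes arise as annihilators of second submodules. On the other hand $M$ is a comultiplication module, hence $S=(0:_M Ann_R(S))$, so each second submodule is uniquely determined by its annihilator. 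Together these give $|X^s(M)|\le r<\infty$. (In fact the Noetherian hypothesis on $R$ is not needed for this step; I would keep it only to match the hypotheses of the related statements.)

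It then remains to check conditions (a)--(d). Condition (a), that $X^s(M)$ is $T_0$, is Lemma \ref{l2.28}(a). Conditions (b) and (c) are immediate once $X^s(M)$ is finite: the space itself, and every open subset, is finite and hence quasi-compact, so the quasi-compact open subsets are all the open subsets; they trivially form a base and are closed under finite intersection. For (d), let $Y$ be an irreducible closed subset of $X^s(M)$. As $M$ is cotop, $Y=V^{s*}(N)$ for some $N\le M$, and by Corollary \ref{c2.27} the submodule $soc(N)$ is then a second submodule of $M$. Since every second submodule contained in $N$ lies in $soc(N)$, we have $V^{s*}(N)=V^{s*}(soc(N))$, and Corollary \ref{c2.24}(a) identifies the latter with $cl(\{soc(N)\})$; thus $soc(N)$ is a generic point of $Y$. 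All four conditions hold, so $X^s(M)$ is a spectral space.

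The only genuine obstacle is the finiteness of $X^s(M)$, where one has to pair the bound on annihilator primes coming from finite length with the comultiplication identity $S=(0:_M Ann_R(S))$; once finiteness is in hand, the rest is just an application of Lemma \ref{l2.28}, Corollary \ref{c2.24}, Corollary \ref{c2.27} and the cotop property.
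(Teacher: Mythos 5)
Your proof is correct, but it takes a genuinely different route from the paper. The paper's proof is essentially a citation: it invokes \cite[7.5]{AF21} (spectrality of the quasi Zariski topology on $Spec^s$ of a finite length comultiplication module over a Noetherian ring) together with Lemma \ref{l2.28} and the observation that for a cotop module the quasi Zariski topology and the second classical Zariski topology coincide. You instead give a self-contained argument whose engine is the finiteness of $X^s(M)$: the composition series bounds the primes that can occur as $Ann_R(S)$ for $S$ second (each must be one of the finitely many maximal ideals occurring as composition factors), and the comultiplication identity $S=(0:_M Ann_R(S))$ makes $S$ determined by its annihilator, so $|X^s(M)|<\infty$; the Hochster conditions then follow as in Theorem \ref{t3.31}, with your generic-point argument via Corollary \ref{c2.27} and Corollary \ref{c2.24}(a) being a clean substitute for the finite-irreducible-set argument used there. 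What your approach buys is transparency and a slightly stronger statement: as you note, the Noetherian hypothesis on $R$ plays no role in your argument (finite length alone suffices), whereas the paper's proof inherits that hypothesis from the cited result. What the paper's approach buys is brevity and consistency with its companion paper; also note that once finiteness of $X^s(M)$ is established, your remaining verifications duplicate Theorem \ref{t3.31} of this paper, so the whole proposition could be compressed to ``finiteness plus Theorem \ref{t3.31}'' (modulo the ordering of results in the text). All the individual steps you use --- primeness of $Ann_R(S)$ for $S$ second, the cotop property of comultiplication modules, and $V^{s*}(N)=V^{s*}(soc(N))$ --- are sound.
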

\begin{proof}
This follows from Lemma \ref{l2.28} and \cite[7.5]{AF21} and the fact that since $M$ is a cotop module, its assigned
topology coincide with the the second classical topology. In this case, every closed set can be written as
$V^{s*}(N)$ for some submodule $N$ of $M$.
\end{proof}

\begin{thm}\label{t3.31}
Let $M$ be an $R$-module with finite second spectrum. Then $X^s(M)$ is a spectral space (with the second Zariski topology). Consequently, for each finite module $M$, $Spec^s(M)$ is a spectral space.
\end{thm}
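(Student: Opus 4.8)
The plan is to verify Hochster's four conditions (a)--(d) for the finite space $X^s(M)$, and three of them come essentially for free. By Lemma~\ref{l2.28}(a) the space $X^s(M)$ is always a $T_0$-space, so (a) holds. A finite topological space is automatically quasi-compact, so (b) holds. Since $X^s(M)$ is finite, every subset is finite and hence quasi-compact; thus the quasi-compact open subsets are precisely all the open subsets, which trivially form an open base and are closed under finite intersection (a topology is closed under finite intersections), giving (c). So the entire weight of the theorem rests on condition (d): every irreducible closed subset of $X^s(M)$ has a generic point.

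For (d), I would fix an irreducible closed subset $Y \subseteq X^s(M)$. Since $X^s(M)$ is finite, $Y = \{S_1,\dots,S_k\}$ for some second submodules $S_1,\dots,S_k$ of $M$. Because $Y$ is closed, Proposition~\ref{p2.200}(b) gives $Y = \bigcup_{i=1}^{k} V^{s*}(S_i)$, and by Corollary~\ref{c2.24}(a) each $V^{s*}(S_i)$ equals $cl(\{S_i\})$, a closed set. Now irreducibility of $Y$, applied inductively to the finite union $Y \subseteq cl(\{S_1\}) \cup \cdots \cup cl(\{S_k\})$, forces $Y \subseteq cl(\{S_i\})$ for some $i$. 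On the other hand $S_i \in Y$ and $Y$ is closed, so $cl(\{S_i\}) \subseteq Y$. Hence $Y = cl(\{S_i\})$, i.e.\ $S_i$ is a generic point of $Y$. This proves (d); together with (a)--(c) it shows $X^s(M)$ is a spectral space. (If $X^s(M) = \emptyset$ the statement is trivial, since $\emptyset$ is the prime spectrum of the zero ring.)

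For the final assertion, if $M$ is a finite $R$-module then it has only finitely many subsets, hence only finitely many submodules, and in particular only finitely many second submodules; so $Spec^s(M)$ is finite and the first part applies. I do not anticipate a genuine obstacle here: the only step that uses the hypothesis in an essential way is the reduction in (d) to a \emph{finite} union of the closures $cl(\{S_i\})$, which is exactly where finiteness of the second spectrum enters. Alternatively, one could deduce (d) from Theorem~\ref{t2.25}: irreducibility of $Y$ makes $T(Y) = \sum_{S \in Y} S$ a second submodule, and the same finite-union argument identifies $T(Y)$ with some $S_i \in Y$, so by Lemma~\ref{l2.28}(b) it is the generic point of $Y$.
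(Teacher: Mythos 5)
Your proof is correct and follows essentially the same route as the paper's: verify Hochster's conditions, noting that finiteness makes $T_0$, quasi-compactness, and the base condition immediate, and obtain a generic point for an irreducible closed $Y$ by writing it as a finite union of the closures $cl(\{S_i\})$ and invoking irreducibility. Your write-up is in fact slightly more careful than the paper's (you handle the empty case and justify $cl(\{S_i\})\subseteq Y$ explicitly), but the argument is the same.
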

\begin{proof}
Since $X^s(M)$ is a nonempty finite set, every subset of $X^s(M)$ is quasi-compact. So the quasi-compact open sets
of $X^s(M)$ are closed under finite intersection. Further $\beta = \{W^s(N_1) \cap W^s(N_2) \cap, ... \cap W^s(N_n), N_i \leq M , n \in \Bbb N \}$
is a basis for  $X^s(M)$ with the property that each basis element, in particular $W^s(M)= X^s(M)$, is quasi-compact.
Also if $Y= \{y_1, y_2, ..., y_n \}$ is an irreducible subset of $X^s(M)$, then we have $cl(Y)= \cup_{i= 1}^n cl(\{ y_i \}$. Since $Y$ is irreducible, $cl(Y)= cl(\{ y_i \})$ for some $1 \leq i \leq n$.
Moreover, $X^s(M)$ is
a $T_0$-space by Lemma \ref{l2.28}. Therefore $X^s(M)$ is a spectral space by Hochster's characterization.
\end{proof}

\begin{defn}\label{d.500}
Let $M$ be an $R$-module and let $H(M)$ be the family of all subsets of $X^s(M)$ of the form $V^{s*}(N) \cap W(K)$, where $N, K \leq M$. $H(M)$ contains both $X^s(M)$ and $\emptyset$ because
$X^s(M)= V^{s*}(M) \cap W^s(0)$ and $ \emptyset= V^{s*}(M) \cap W^s(M)$. Let $Z(M)$ be the collection of all unions of finite intersections of elements of $H(M)$. Then $Z(M)$ is a topology on $X^s(M)$ and called the
\emph{finer patch topology} or \emph{constructible topology}. In fact, $H(M)$ is a sub-basis for the finer patch topology of $M$.
\end{defn}

\begin{thm}\label{t3.9}
Let $M$ be an $R$-module such that $M$ satisfies descending chain condition of socle submodules. Then $X^s(M)$ with the finer patch topology is a compact space.
\end{thm}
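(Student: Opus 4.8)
The plan is to deduce compactness from Alexander's subbase lemma, turning a hypothetical infinite cover into an actual point of $X^s(M)$ that the cover misses. First observe that $\mathcal S=\{V^{s*}(N):N\le M\}\cup\{W^s(K):K\le M\}$ is a sub-basis for the finer patch topology: every member $V^{s*}(N)\cap W^s(K)$ of $H(M)$ is an intersection of the two members $V^{s*}(N),W^s(K)$ of $\mathcal S$, while $V^{s*}(N)=V^{s*}(N)\cap W^s(0)$ and $W^s(K)=V^{s*}(M)\cap W^s(K)$ lie in $H(M)$. By Alexander's lemma it then suffices to show that an arbitrary cover $X^s(M)=\bigcup_{\alpha\in A}V^{s*}(N_\alpha)\cup\bigcup_{\beta\in B}W^s(K_\beta)$ has a finite subcover (we may assume $X^s(M)\ne\emptyset$, the empty case being trivial). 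Suppose it does not. A finite subfamily $\{V^{s*}(N_\alpha)\}_{\alpha\in A_0}\cup\{W^s(K_\beta)\}_{\beta\in B_0}$ fails to cover $X^s(M)$ precisely when $\bigcap_{\beta\in B_0}V^{s*}(K_\beta)\cap\bigcap_{\alpha\in A_0}W^s(N_\alpha)\ne\emptyset$, so the family $\{V^{s*}(K_\beta):\beta\in B\}\cup\{W^s(N_\alpha):\alpha\in A\}$ of subsets of $X^s(M)$ has the finite intersection property, hence is contained in some ultrafilter $\mathcal F$ of subsets of $X^s(M)$.

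The crux is to manufacture from $\mathcal F$ a single second submodule of $M$. Put $\mathcal N=\{N\le M:\ V^{s*}(N)\in\mathcal F\}$. Since $V^{s*}(N_1\cap N_2)=V^{s*}(N_1)\cap V^{s*}(N_2)$, since $V^{s*}$ preserves inclusions, and since $\mathcal F$ is an upward-closed filter, $\mathcal N$ is a filter of submodules of $M$; moreover $V^{s*}(N)=V^{s*}(soc(N))$ for every $N$, and each $soc(N)$ is a socle submodule. As $soc(N_1\cap N_2)\subseteq soc(N_1)\cap soc(N_2)$, the family $\{soc(N):N\in\mathcal N\}$ of socle submodules is nonempty and downward directed, so by the descending chain condition it has a minimal member $L:=soc(N^*)$, which is therefore its least member; and $V^{s*}(L)=V^{s*}(N^*)\in\mathcal F$ forces $L\ne0$. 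I claim $L$ is second. If not, then since $L\ne0$ there is $r\in R$ with $rL\ne0$ and $rL\ne L$. Every second submodule of $L$ is either annihilated by $r$, hence contained in $(0:_Mr)$, or fixed by $r$, hence contained in $rL$; therefore $V^{s*}(L)=V^{s*}(rL)\cup V^{s*}\big(L\cap(0:_Mr)\big)$, and since $\mathcal F$ is an ultrafilter one of the two sets on the right lies in $\mathcal F$. In the first case $rL\in\mathcal N$, so $soc(rL)$ belongs to the family above and leastness of $L$ gives $L\subseteq soc(rL)\subseteq rL$, whence $rL=L$, a contradiction. In the second case $L\cap(0:_Mr)\in\mathcal N$, so leastness gives $L\subseteq soc\big(L\cap(0:_Mr)\big)\subseteq(0:_Mr)$, whence $rL=0$, again a contradiction. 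Hence $L\in X^s(M)$.

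It remains to check that $L$ lies in no member of the cover. For an arbitrary submodule $N$ one has $V^{s*}(N)\in\mathcal F\Leftrightarrow N\in\mathcal N\Rightarrow L\subseteq soc(N)\subseteq N\Rightarrow L\in V^{s*}(N)$, and conversely $L\in V^{s*}(N)\Rightarrow V^{s*}(L)\subseteq V^{s*}(N)\Rightarrow V^{s*}(N)\in\mathcal F$; thus $V^{s*}(N)\in\mathcal F$ if and only if $L\in V^{s*}(N)$. Taking $N=K_\beta$, and recalling $V^{s*}(K_\beta)\in\mathcal F$, gives $L\subseteq K_\beta$, so $L\notin W^s(K_\beta)$, for every $\beta$; taking $N=N_\alpha$, and recalling $W^s(N_\alpha)\in\mathcal F$ so $V^{s*}(N_\alpha)\notin\mathcal F$, gives $L\not\subseteq N_\alpha$, so $L\notin V^{s*}(N_\alpha)$, for every $\alpha$. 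Then $L$ is an element of $X^s(M)$ lying in no set of the cover, contradicting that it covers $X^s(M)$; hence a finite subcover must exist, and $X^s(M)$ with the finer patch topology is compact. The one genuinely delicate point is the claim that the least socle submodule $L$ is actually second — this is exactly where the descending chain condition is converted into membership in $X^s(M)$, and everything else is bookkeeping with $V^{s*}$. (Equivalently, the construction of $L$ shows that the canonical injection of $X^s(M)$, with the patch topology, into the Stone space of its Boolean algebra of clopen subsets is onto, hence a homeomorphism, which yields compactness directly.)
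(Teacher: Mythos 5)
Your proof is correct, but it takes a genuinely different route from the paper's. The paper runs a minimal-counterexample argument directly on an arbitrary patch-open cover: assuming no finite subcover, it uses the dcc on socle submodules to pick a submodule $N$ minimal with the property that $V^{s*}(N)$ admits no finite subcover, shows $N$ must be second via the dichotomy $V^{s*}(N)\subseteq V^{s*}(N\cap L)\cup V^{s*}(N\cap(0:_Mr))$, and then derives a contradiction by analyzing a basic patch-neighborhood $\cap_i\bigl(W^s(K_i)\cap V^{s*}(N_i)\bigr)$ of $N$ and covering the leftover pieces $V^{s*}(K_i\cap N)$ by minimality. You instead reduce to sub-basic covers via Alexander's subbase lemma, pass to an ultrafilter $\mathcal F$ containing the complements, and use the dcc to extract the least socle submodule $L$ with $V^{s*}(L)\in\mathcal F$; the same second-submodule dichotomy, now combined with the prime property of $\mathcal F$, shows $L$ is second, and $L$ is then an explicit uncovered point. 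The two arguments use the dcc and Lemma \ref{l2.25} in essentially the same way, but your endgame is cleaner: by working only with sub-basic sets you never have to manipulate general basic patch-neighborhoods, which is the fiddliest (and, as printed, slightly garbled) part of the paper's proof. The price is the appeal to Alexander's lemma and the ultrafilter lemma, so your argument is less elementary in its set-theoretic inputs, though these are standard for patch/constructible-topology compactness proofs. All the individual steps check out: the identities $V^{s*}(N_1\cap N_2)=V^{s*}(N_1)\cap V^{s*}(N_2)$ and $V^{s*}(N)=V^{s*}(soc(N))$, the downward directedness of $\{soc(N):N\in\mathcal N\}$ (so that a dcc-minimal member is least), the non-vanishing of $L$ because $\emptyset\notin\mathcal F$, and the final equivalence $V^{s*}(N)\in\mathcal F\Leftrightarrow L\in V^{s*}(N)$ are all valid.
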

\begin{proof}
Let $A$ be a family of finer patch-open sets covering $X^s(M)$, and suppose that no finite subfamily of $A$ covers $X^s(M)$. Since $V^{s*}(soc(M))=V^{s*}(M)=X^s(M)$, we may use the $dcc$ on socle submodules to choose a submodule $N$ minimal with respect to the property that no finite subfamily of $A$ covers $V^{s*}(N)$ (note that we may assume $N=soc(N)$ because $V^{s*}(N)=V^{s*}(soc(N))$. We claim that $N$ is a second submodule of $M$ , for if not, then there exist a submodule $L$ of $M$ and $r \in R$ such that $rN\subseteq L$, $rN \not =0$, and $N \not \subseteq L$. Thus $N \cap L\subset N$ and $Soc(N \cap (0:_Mr))\subseteq N \cap (0:_Mr)\subset N$. Hence without loss of generality, there must be a finite subfamily $\acute{A}$ of $A$ that covers both $V^{s*}(N \cap (0:_Mr))$ and $V^{s*}(N \cap L)$. Let $S \in V^{s*}(N)$. Since $rN \subseteq L$, we have $rS\subseteq L$. Since $S$ is second, $S\subseteq L$ or $rS=0$. Thus either $S \in V^{s*}(N \cap L)$ or $S \in V^{s*}(N \cap (0:_Mr))$. Therefore,
$$
V^{s*}(N)\subseteq V^{s*}(N \cap (0:_Mr)) \cup V^{s*}(N \cap L).
$$
Thus $V^{s*}(N)$ is covered with the finite subfamily $\acute{A}$, a contradiction. Hence $N$ is a second submodule of $M$. Now choose $U \in A$ such that $N \in U$. Thus $N$ must have a patch-neighborhood $\cap_{i=1}^n(W^s(K_i) \cap V^{s*}(N_i))$, for some $K_i, N_i\leq M$, $n \in \Bbb N$, such that
$$
\cap_{i=1}^n[W^s(K_i) \cap V^{s*}(N_i)]\subseteq U.
$$
We claim that for each $i$ ($1\leq i\leq n$),
 $$
 N \in W^s(K_i \cap N) \cap V^{s*}(N)\subseteq W^s(K_i) \cap V^{s*}(N_i).
 $$
 To see this, assume that $S \in W^s(K_i \cap N) \cap V^{s*}(N)$ so that $S \not \subseteq K_i \cap N$ and $S \subseteq N$. Thus $S \not \subseteq K_i$, i.e., $S \in W^s(K_i)$. On the other hand, $N \in V^{s*}(N_i)$ and $S \subseteq N$. Thus $S \in V^{s*}(N_i)$. Hence we have
 $$
  N \in \cap_{i=1}^n[W^s(K_i \cap N) \cap V^{s*}(N)]\subseteq \cap_{i=1}^n[W^s(K_i) \cap V^{s*}(N_i)]\subseteq U.
  $$
 Thus $[\cap_{i=1}^nW^s(\acute{K_i})] \cap V^{s*}(N)$, where $\acute{K_i}:= K_i \cap N \subset N$ is a neighborhood of $N$ with $[\cap_{i=1}^n[W^s(\acute{K_i})] \cap V^{s*}(N)\subseteq U$. Since for each $i$ $(1\leq i\leq n)$, $\acute{K_i}\subset N$, $V^{s*}(\acute{K_i})$ can be covered by some finite subfamily $\acute{A_i}$ of $A$. But
  $$
  N^{s*}(N)-[\cup_{i=1}^nV^{s*}(\acute{K_i})]=V^{s*}(N)-[\cap _{i=1}^nW^s(\acute{K_i})]^c=[\cap_{i=1}^n[W^s(\acute{K_i})]\cap V^{s*}(N)\subseteq U.
  $$
  Hence $V^{s*}(N)$ can be covered by $\acute{A_1} \cup \acute{A_2}\cup...\cup \acute{A_n}\cup \{U\}$, contrary to our choice of $N$. Thus there exists a finite subfamily of $A$ which covers $X^s(M)$. Therefore, $X^s(M)$ is compact in the finer patch topology of $M$.
\end{proof}
\begin{prop}\label{p3.800}
Let $M$ be $R$-module such that $M$ has dcc on socle submodules. Then every irreducible
closed subset of $X^s(M)$ (with second classical Zariski topology) has a generic point.
\end{prop}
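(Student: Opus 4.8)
The plan is to exhibit the generic point explicitly. Let $Y$ be an irreducible closed subset of $X^s(M)$; the candidate generic point is $T(Y)=\sum_{S\in Y}S$, and the two things to prove are that $T(Y)\in Y$ and that $Y=cl(\{T(Y)\})$. I would first observe that $T(Y)$ is a genuine point of $X^s(M)$: an irreducible set is nonempty, so $Y$ contains some (necessarily nonzero) second submodule $S$, whence $T(Y)\supseteq S\neq 0$, and $T(Y)$ is itself a second submodule of $M$ by Theorem \ref{t2.25}(a).

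The core step is $T(Y)\in Y$. Since $Y$ is closed in the second classical Zariski topology, it has the form $\bigcap_{i\in I}\bigl(\bigcup_{j=1}^{n_i}V^{s*}(N_{ij})\bigr)$ for suitable submodules $N_{ij}\leq M$. Fixing $i\in I$, we have $Y\subseteq\bigcup_{j=1}^{n_i}V^{s*}(N_{ij})$, a finite union of sets closed in $X^s(M)$, so iterating the two-set irreducibility condition produces an index $j_i$ with $Y\subseteq V^{s*}(N_{ij_i})$. Then $S\subseteq N_{ij_i}$ for every $S\in Y$, hence $T(Y)=\sum_{S\in Y}S\subseteq N_{ij_i}$; since $T(Y)$ is a second submodule, this gives $T(Y)\in V^{s*}(N_{ij_i})\subseteq\bigcup_{j=1}^{n_i}V^{s*}(N_{ij})$. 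As $i\in I$ was arbitrary, $T(Y)\in Y$. This is the step I expect to be the main (if modest) obstacle: it is not obvious a priori that the sum of all members of the closed set $Y$ again belongs to $Y$, and what forces it is precisely the finiteness of the inner unions combined with irreducibility.

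To conclude, by Proposition \ref{p2.200}(b) we have $Y=\bigcup_{S\in Y}V^{s*}(S)$, and since $T(Y)\in Y$ this yields $V^{s*}(T(Y))\subseteq Y$; conversely $S\subseteq T(Y)$ for each $S\in Y$, so $Y\subseteq V^{s*}(T(Y))$. Hence $Y=V^{s*}(T(Y))=cl(\{T(Y)\})$ by Corollary \ref{c2.24}(a), i.e.\ $T(Y)$ is a generic point of $Y$ (and the unique one, since $X^s(M)$ is a $T_0$-space by Lemma \ref{l2.28}). I would also note that this argument does not in fact invoke the descending chain condition on socle submodules; that hypothesis is present because the proposition supplies the last of the Hochster axioms needed to conclude that $X^s(M)$ is a spectral space, the quasi-compactness axioms (e.g.\ Theorem \ref{t3.9}) being where the chain condition is genuinely used.
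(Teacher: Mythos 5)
Your proof is correct, and it takes a genuinely different route from the paper's. The paper argues via the finer patch topology: it observes that $Y$ is patch-closed and each $V^{s*}(S)$ is patch-open, invokes the patch-compactness of $X^s(M)$ from Theorem \ref{t3.9} (this is exactly where the descending chain condition on socle submodules enters) to refine the covering $Y=\bigcup_{S\in Y}V^{s*}(S)$ of Proposition \ref{p2.200}(b) to a finite one, and then applies irreducibility to that finite union of closed sets to conclude $Y=V^{s*}(S)=cl(\{S\})$ for some $S\in Y$. You instead name the generic point outright as $T(Y)$, certify that it is a second submodule via Theorem \ref{t2.25}(a), and prove $T(Y)\in Y$ directly from the sub-basis description of closed sets (each layer $\bigcup_{j=1}^{n_i}V^{s*}(N_{ij})$ is a \emph{finite} union of closed sets, so irreducibility pins $Y$, and hence $T(Y)$, inside a single $V^{s*}(N_{ij_i})$); the identification $Y=V^{s*}(T(Y))=cl(\{T(Y)\})$ then follows as you say. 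Each step of your argument checks out, including the delicate point that $T(Y)\ne 0$ because irreducible sets are by definition nonempty. Your closing observation is also right and worth emphasizing: your argument never uses the chain condition, so it proves the stronger statement that for an arbitrary $R$-module every irreducible closed subset of $X^s(M)$ has a (necessarily unique, by $T_0$) generic point, namely $T(Y)$. What the paper's route buys is economy within Section 3 --- the patch-topology compactness is developed anyway for the quasi-compactness axioms of Hochster's criterion, so the authors reuse it here; what your route buys is the elimination of a hypothesis and an explicit, canonical description of the generic point.
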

\begin{proof}
Let $Y$ be an irreducible closed subset of $X^s(M)$. First we note that if $N$ is a submodule of $M$, then $V^{s*}(N)$ and $W^s(N)$ are both open and closed
in finer patch topology. Hence $V^{s*}(S)$, where $S \in Y$, and $Y$ are also an open
and a closed set in finer patch topology respectively. Since $X^s(M)$ is a compact space in finer patch topology by
Theorem \ref{t3.9} and $Y$ is closed in $X^s(M)$, we have $Y$ is a compact space in finer patch topology. Now
$Y= \cup_{S \in Y}V^{s*}(S)$ by Proposition \ref{p2.200}(b) and each $V^{s*}(N)$ is open in finer patch topology. Hence
there exists a finite set $Y_1 \subseteq Y$ such that $Y= \cup_{S \in Y_1}V^{s*}(S)$. Since $Y$ is irreducible,
$Y= V^{s*}(S)= cl(\{S \})$ for some $S \in Y$. Hence $S$ is a generic point for $Y$, as desired.
\end{proof}

\begin{lem}\label{l.300}
Assume $\zeta_1$ and  $\zeta_2$ are two topologies on $X^s(M)$ such that  $\zeta_1 \leq \zeta_2$.
If  $X^s(M)$ is quasi-compact (i.e., every open cover of it has a finite subcover) in $\zeta_2$, then $X^s(M)$ is also
quasi-compact in $\zeta_1$.
\end{lem}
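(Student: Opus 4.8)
The plan is to unwind the definitions of coarser/finer topologies together with the definition of quasi-compactness, since $\zeta_1 \leq \zeta_2$ literally means every $\zeta_1$-open subset of $X^s(M)$ is also $\zeta_2$-open. Concretely, I would start with an arbitrary family $\{U_\lambda\}_{\lambda \in \Lambda}$ of $\zeta_1$-open sets with $X^s(M) = \bigcup_{\lambda \in \Lambda} U_\lambda$, and observe that each $U_\lambda$ lies in $\zeta_2$ as well; hence $\{U_\lambda\}_{\lambda \in \Lambda}$ is also an open cover of $X^s(M)$ in $\zeta_2$.

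Next I would invoke the hypothesis that $X^s(M)$ is quasi-compact in $\zeta_2$ to extract a finite subset $\Lambda_0 \subseteq \Lambda$ with $X^s(M) = \bigcup_{\lambda \in \Lambda_0} U_\lambda$. Since each $U_\lambda$ with $\lambda \in \Lambda_0$ is a member of the original $\zeta_1$-open family, this finite subcollection is a finite subcover of the $\zeta_1$-cover we started with. As $\{U_\lambda\}$ was arbitrary, every $\zeta_1$-open cover of $X^s(M)$ admits a finite subcover, i.e.\ $X^s(M)$ is quasi-compact in $\zeta_1$, which is exactly the assertion.

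The argument is purely formal and uses no special feature of $X^s(M)$ or of the second classical Zariski topology; the only ``step'' is the inclusion $\zeta_1 \subseteq \zeta_2$, so I do not anticipate any genuine obstacle. The reason the lemma is recorded here is presumably to apply it downstream with $\zeta_2$ the finer patch topology (shown to be compact in Theorem \ref{t3.9}, at least under the dcc on socle submodules hypothesis) and $\zeta_1$ the second classical Zariski topology, which is coarser since $H(M)$ refines the sub-basis $\Omega^s(M)$; thus the real content is in Theorem \ref{t3.9}, and this lemma is just the transfer mechanism.
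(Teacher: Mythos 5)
Your argument is correct and is exactly the standard one: a $\zeta_1$-open cover is automatically a $\zeta_2$-open cover, so quasi-compactness in the finer topology yields a finite subcover that also works for the coarser one. The paper states this lemma without proof (treating it as evident), and your write-up, including the intended application with $\zeta_2$ the finer patch topology and $\zeta_1$ the second classical Zariski topology, supplies precisely the routine verification that was omitted.
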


\begin{thm}\label{t3.10}
Let $M$ be an $R$-module such that $M$ has dcc on socle submodules. Then for each $n \in \Bbb N$,
and submodules $N_i$ $(1\leq i \leq n)$ of $M$, $W^s{(N_1)} \cap W^s{(N_2)} \cap, ... \cap W^s{(N_n)}$, is a quasi compact subset of $X^s(M)$ with the second classical Zariski topology.
\end{thm}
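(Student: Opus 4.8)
The plan is to exploit the compactness of the finer patch topology established in Theorem \ref{t3.9}, together with the fact that the second classical Zariski topology is coarser than the finer patch topology. Write $A := W^s(N_1)\cap W^s(N_2)\cap\cdots\cap W^s(N_n)$.

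First I would record the observation (already used inside the proof of Proposition \ref{p3.800}) that for every submodule $N$ of $M$ the set $W^s(N)$ is simultaneously open and closed in the finer patch topology $Z(M)$: indeed $W^s(N)=V^{s*}(M)\cap W^s(N)\in H(M)$ is a basic patch-open set, while its complement $V^{s*}(N)=V^{s*}(N)\cap W^s(0)\in H(M)$ is also patch-open, so $W^s(N)$ is patch-closed as well. Consequently $A$, being a finite intersection of patch-closed sets, is closed in $Z(M)$.

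Next, since $M$ has dcc on socle submodules, Theorem \ref{t3.9} gives that $X^s(M)$ equipped with $Z(M)$ is compact; a closed subspace of a compact space is compact, so $A$ is compact in the subspace topology induced by $Z(M)$. On the other hand the sub-basis $\Omega^s(M)$ of the second classical Zariski topology $\eta^s(M)$ is contained in $H(M)$, hence $\eta^s(M)\le Z(M)$, and this inequality is inherited by the subspace topologies on $A$: the subspace topology on $A$ coming from $\eta^s(M)$ is coarser than the one coming from $Z(M)$. Applying Lemma \ref{l.300} to $A$ with these two comparable topologies (its proof is purely topological and remains valid for an arbitrary space), we conclude that $A$ is quasi-compact in the subspace topology induced by the second classical Zariski topology, which is exactly the assertion.

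Since every step is a routine topological manipulation once Theorem \ref{t3.9} is in hand, I do not expect a genuine obstacle; the only points demanding care are the verification that $W^s(N)$ is clopen in the patch topology (so that $A$ is patch-closed, hence patch-compact) and the remark that Lemma \ref{l.300}, though stated for $X^s(M)$, applies verbatim to the subspace $A$.
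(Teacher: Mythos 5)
Your proof is correct and follows essentially the same route as the paper's: observe that $W^s(N_1)\cap\cdots\cap W^s(N_n)$ is closed in the finer patch topology, invoke the patch-compactness of $X^s(M)$ from Theorem \ref{t3.9} to get patch-compactness of this closed subset, and then pass to the coarser second classical Zariski topology via Lemma \ref{l.300}. You merely spell out the details (why $W^s(N)$ is patch-clopen and why the comparison of topologies descends to the subspace) that the paper leaves as ``clearly.''
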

\begin{proof}
Clearly, for each $n \in \Bbb N$ and each submodule $N_i$ ($1\leq i \leq n$) of $M$,  $W^s(N_1)
\cap W^s(N_2) \cap, ... \cap W^s(N_n)$ is a closed set in $X^s(M)$ with finer patch topology so that it compact
in $X^s(M)$ with finer patch topology. Hence it is quasi-compact in $X^s(M)$ with the second classical Zariski topology by Lemma \ref{l.300}, as desired.
\end{proof}

\begin{cor}\label {c.300}
Let $M$ be an $R$-module such that $M$ has dcc on socle submodules.
Then quasi-compact open sets of $X^s(M)$ are closed under finite intersections.
\end{cor}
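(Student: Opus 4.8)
The plan is to derive this purely formally from Theorem \ref{t3.10}, using the standard facts that a finite union of quasi-compact subspaces is quasi-compact and that, in a space equipped with a chosen open base, every quasi-compact open set is a finite union of basic open sets. By the definition of $\eta^s(M)$ as the topology with sub-basis $\Omega^s(M)=\{W^s(N):N\le M\}$, the sets $B=W^s(N_1)\cap W^s(N_2)\cap\dots\cap W^s(N_n)$ (with $n\in\Bbb N$ and $N_i\le M$) form an open base of $X^s(M)$, and by Theorem \ref{t3.10} each such $B$ is quasi-compact.

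It then suffices to show that the intersection of two quasi-compact open sets $U,V$ is quasi-compact, the general case following by induction. First I would write $U$ as a union of basic open sets and, using quasi-compactness of $U$, extract a finite subcover, so $U=B_1\cup\dots\cup B_p$ with each $B_i$ basic; likewise $V=C_1\cup\dots\cup C_q$. Then $U\cap V=\bigcup_{i,j}(B_i\cap C_j)$. The key observation is that each $B_i\cap C_j$ is again a finite intersection of sub-basic open sets $W^s(\cdot)$, hence exactly of the type handled by Theorem \ref{t3.10}, so it is quasi-compact. Therefore $U\cap V$ is a finite union of quasi-compact sets and so is quasi-compact; being an intersection of open sets it is open.

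I do not anticipate a genuine obstacle here: all the substance already resides in Theorem \ref{t3.10} (which in turn rests on the compactness of the finer patch topology, Theorem \ref{t3.9}). The only points needing a line of care are the elementary bookkeeping that an intersection of two basic open sets is still a basic open set in the sense of Theorem \ref{t3.10}, the remark that a finite union of quasi-compact subsets is quasi-compact, and the trivial case $X^s(M)=\emptyset$, in which the assertion is vacuous.
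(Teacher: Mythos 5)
Your proof is correct and follows essentially the same route as the paper: decompose each quasi-compact open set into a finite union of basic open sets $W^s(N_1)\cap\dots\cap W^s(N_n)$, distribute the intersection, and apply Theorem \ref{t3.10} to each resulting basic piece. You are in fact slightly more careful than the paper in justifying (via quasi-compactness) why each $U_i$ is a \emph{finite} union of basic open sets, a step the paper merely asserts.
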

\begin{proof}
It suffices to show that the intersection $U= U_1 \cap U_2$ of two quasi-compact open sets $U_1$ and  $U_2$
of $X^s(M)$ is a quasi-compact set.
Each $U_i$, $i=1, 2$, is a finite union of members of the open base $\beta = \{W^s(N_1) \cap W^s(N_2) \cap, ... \cap W^s(N_n), N_i \leq M, n \in \Bbb N \}$, hence so is $U= \cup_{i=1}^m (\cap_{j=1}^{n_i} W^s(N_j)$.  Let $\Omega$ be any open cover of $U$.
Then $\Omega$ also covers each $\cap_{j=1}^{n_i} W^s(N_j)$ which is quasi-compact by Theorem \ref{t3.10}. Hence each $\cap_{j=1}^{n_i} W^s(N_j)$ has a finite subcover of $\Omega$ and so does $U$.
\end{proof}

\begin{thm}\label{t3.11}
Let $M$ be an $R$-module $R$-module such that $M$ has dcc on socle submodules. Then
$X^s(M)$ with the second classical Zariski topology is a spectral space.
\end{thm}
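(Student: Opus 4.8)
The plan is to verify Hochster's four conditions for $X^s(M)$ one by one, drawing on the machinery already assembled in this section. Condition (a), that $X^s(M)$ is a $T_0$-space, is immediate from Lemma \ref{l2.28}(a) and requires no hypothesis on $M$. Condition (b), that $X^s(M)$ is quasi-compact, follows from Theorem \ref{t3.10} applied with $n=1$ and $N_1=M$: since $W^s(M)=X^s(M)$, the dcc on socle submodules gives quasi-compactness of the whole space. Condition (d), that every irreducible closed subset has a generic point, is precisely Proposition \ref{p3.800}, which again assumes only the dcc on socle submodules.

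So the only remaining work is condition (c): the quasi-compact open subsets of $X^s(M)$ are closed under finite intersection and form an open base. The ``closed under finite intersection'' half is exactly Corollary \ref{c.300}. For the ``open base'' half, I would recall that the collection $\beta=\{W^s(N_1)\cap W^s(N_2)\cap\cdots\cap W^s(N_n): N_i\leq M,\ n\in\Bbb N\}$ is by construction a base for the second classical Zariski topology (it is the set of finite intersections of sub-basic opens $W^s(N)$), and by Theorem \ref{t3.10} every member of $\beta$ is quasi-compact. Hence the quasi-compact open sets contain a base, which is all that is needed; every open set is a union of members of $\beta$, each of which is quasi-compact open.

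Having checked (a) through (d), I would conclude by Hochster's characterization (stated at the start of Section 3) that $X^s(M)$ with the second classical Zariski topology is a spectral space.

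I do not anticipate a genuine obstacle here, since all the substantive topological work has been done in Theorems \ref{t3.9} and \ref{t3.10}, Proposition \ref{p3.800}, and Corollaries \ref{c.300}; the proof is essentially an assembly of these pieces. The one point that deserves a careful sentence is the distinction between ``the quasi-compact open sets form a base'' and ``there is a base of quasi-compact open sets'' — it suffices to exhibit one base consisting of quasi-compact opens, namely $\beta$, and then note that arbitrary opens are unions of such sets, so I would make sure to phrase condition (c) in that form rather than claiming every quasi-compact open lies in $\beta$.

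\begin{proof}
We verify the four conditions in Hochster's characterization of spectral spaces.

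(a) By Lemma \ref{l2.28}(a), $X^s(M)$ is a $T_0$-space.

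(b) By Theorem \ref{t3.10} applied with $n=1$ and $N_1=M$, the set $W^s(M)=X^s(M)$ is quasi-compact.

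(c) By Theorem \ref{t3.10}, every member of $\beta=\{W^s(N_1)\cap W^s(N_2)\cap\cdots\cap W^s(N_n): N_i\leq M,\ n\in\Bbb N\}$ is a quasi-compact open subset of $X^s(M)$. Since $\beta$ is, by definition of the second classical Zariski topology, a base for $X^s(M)$, the quasi-compact open subsets of $X^s(M)$ form an open base. Moreover, by Corollary \ref{c.300}, the quasi-compact open subsets of $X^s(M)$ are closed under finite intersection.

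(d) By Proposition \ref{p3.800}, every irreducible closed subset of $X^s(M)$ has a generic point.

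Therefore $X^s(M)$ with the second classical Zariski topology is a spectral space by Hochster's characterization.
\end{proof}
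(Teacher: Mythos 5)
Your proof is correct and follows essentially the same route as the paper: it assembles Hochster's four conditions from Lemma \ref{l2.28}, Theorem \ref{t3.10}, Corollary \ref{c.300}, and Proposition \ref{p3.800}. One small slip (which the paper's own proof also makes): since $V^{s*}(M)=X^s(M)$, we have $W^s(M)=\emptyset$, so the whole space is $W^s(0)$, not $W^s(M)$; in step (b) you should take $N_1=0$.
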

\begin{proof}
We have $X^s(M)$ is a $T_0$ space by Lemma \ref{l2.28}.
Further $\beta = \{W^s(N_1) \cap W^s(N_2) \cap, ... \cap W^s(N_n), N_i \leq M, n \in \Bbb N \}$
is a basis for $X^s(M)$ with the property that each basis element, in particular $W^s(M)= X^s(M)$, is quasi-compact by Theorem \ref{t3.10}.
 Moreover, by Corollary \ref{c.300}, the quasi-compact open sets are closed under any finite intersections. Finally, every irreducible closed set has a generic point by Proposition \ref{p3.800}.
 Therefore, $X^s(M)$ is a spectral space by Hochster's characterization.
\end{proof}

\textbf{Acknowledgments.} We would like to thank the Dr. R. Ovlyaee-Sarmazdeh for his helpful comments.
\bibliographystyle{amsplain}

\begin{thebibliography}{100}
\bibitem{Ab10}
J. Y. Abuhlail, \emph{Zariski topologies for coprime and second submodules}, to appear
in Algebra Colloquium.

\bibitem{Ab11}
J. Y. Abuhlail, \emph{A Dual Zariski Topology for Modules}, Topology and its Applications, \textbf{158} (3) (2011) 457–467.

\bibitem{AF07}
H.~Ansari-Toroghy and F.~Farshadifar, \emph{The dual notion of multiplication modules}, Taiwanese Journal of Mathematics \textbf{11} (4) (2007), 1189--1201.

\bibitem{AF11}
H.~Ansari-Toroghy and F.~Farshadifar, \emph{The dual notions of some generalizations of prime submodules}, Comm. Algebra \textbf{39} (7)(2011), 2396-2416.

\bibitem{AF12}
H.~Ansari-Toroghy and F.~Farshadifar, \emph{On the dual notion of prime submodules}, to appear in Algebra Colloq.

\bibitem{AF1}
H.~Ansari-Toroghy and F.~Farshadifar, \emph{On the dual notion of prime submodules (II)}, to appear in Mediterr. J. Math.

\bibitem{AF21}
H.~Ansari-Toroghy and F.~Farshadifar, \emph{The Zariski topology on the second spectrum of a module}, to appear in Algebra Colloq.

\bibitem{AF25}
H.~Ansari-Toroghy and F.~Farshadifar, \emph{On the dual notion of prime radicals of submodules}, submitted.

\bibitem{AF13}
H.~Ansari-Toroghy and F.~Farshadifar, \emph{On the dual notion of multiplication modules}, AJSE-Mathematics,\textbf{ 36} (6) (2011), 925-932.

\bibitem{BH08}
M. Behboodi and  M. R. Haddadi, \emph{Classical  Zariski  Topology  of  Modules and  Spectral Spaces I}, Inter. Electronic J. Alg. \textbf{4} (2008), 104-130.

\bibitem{BHa0}
M. Behboodi and  M. R. Haddadi, \emph{Classical  Zariski  Topology  of  Modules and  Spectral Spaces II}, Inter. Electronic J. Alg. \textbf{4} (2008), 131-148.

\bibitem{Ho69}
M. Hochster, \emph{Prime ideal structure in commutative rings}, Trans. Amer. Math. Soc. 142 (1969), 43-60.

\bibitem{Ho71}
M. Hochster, \emph{The minimal prime spectrum  of a commutative  Ring}, Canad. J. Math. \textbf{23}(1971), 749-758.

\bibitem{lS03}
H.C. Li and K.P. Shum, \emph{On a problem of spectral posets}. J. Appl. Algebra Discrete Struct. \textbf{1} (3) (2003), 203-209.

\bibitem{lu99}
C.P. Lu, \emph{The Zariski topology on the prime spectrum of a module}, Houston
  J. Math. \textbf{25} (3) (1999),  417--432.

\bibitem{lu10}
C.P. Lu, \emph{Modules with Noetherian spectrum}, Comm. Algebra \textbf{38} (2010), 807-–828.

\bibitem{Mu75}
J. R. Munkres, \emph{Topology. A First Course}. Prentice Hall, 1975.

\bibitem{va65}
P. Vamos, \emph{The dual of the notion of "finitely generated"}, J. London Math. Soc \textbf{43} (1965), 643-646.

\bibitem{S10}
N. Van Sanh, L.P. Thas, N.F.A. Al-Mepahi, and K. P.Shum, \emph{Zariski Topology of prime spectrum of a module}, Proceedings of the International conference 2010 held in Gajha
Mada Univesity, Novenber, 2011, Yojakarta, Indonesia (Abvance in algebraic structure), World Scientific, Inc., 461-477.

\bibitem{Y01}
S.~Yassemi, \emph{The dual notion of prime submodules}, Arch. Math (Brno)
\textbf{37} (2001), 273--278.
\end{thebibliography}

\end{document}